\providecommand{\Dim}{\operatorname{dim}}            
\providecommand{\dim}{\Dim}
\providecommand*{\Dist}[2]{\operatorname{dist}({#1};{#2})}   
\providecommand*{\Dist}[2]{\Dist{#1}{#2}}
\providecommand*{\Span}[1]{\operatorname{Span}\left\{{#1}\right\}}     
\newcommand{\Bx}{{\boldsymbol{x}}}
\newcommand{\Ca}{{\cal A}}
\newcommand{\Cb}{{\cal B}}
\newcommand{\Ch}{{\cal H}}
\newcommand{\Ci}{{\cal I}}
\newcommand{\Cl}{{\cal L}}
\newcommand{\Cn}{{\cal N}}
\newcommand{\Cr}{{\cal R}}
\newcommand{\Ct}{{\cal T}}
\newcommand{\bbA}{\mathbb{A}}
\newcommand{\bbR}{\mathbb{R}}
\newcommand{\bbZ}{\mathbb{Z}}
\newcommand*{\N}[1]{\left\|{#1}\right\|}     
\newcommand*{\SN}[1]{\left|{#1}\right|}      
\newcommand*{\Lp}[2][\defaultdomain]{L^{#2}({#1})}
\newcommand*{\NLp}[3][\defaultdomain]{\N{#2}_{\Lp[#1]{#3}}}
\newcommand*{\Ltwo}[1][\defaultdomain]{\Lp[#1]{2}}
\newcommand*{\NLtwo}[2][\defaultdomain]{\NLp[#1]{#2}{2}}
\newcommand*{\Linf}[1][\defaultdomain]{L^{\infty}({#1})}
\newcommand*{\NLinf}[2][\defaultdomain]{\N{#2}_{\Linf[{#1}]}}
\newcommand*{\Hm}[2][\defaultdomain]{H^{#2}({#1})}
\newcommand*{\bHm}[3][\defaultdomain]{H_{#3}^{#2}({#1})}
\newcommand*{\Hone}[1][\defaultdomain]{\Hm[#1]{1}}
\newcommand*{\zbHone}[1][\defaultdomain]{\bHm[#1]{1}{0}}
\newcommand*{\NHone}[2][\defaultdomain]{{\N{#2}}_{\Hone[{#1}]}}
\newcommand*{\SNHone}[2][\defaultdomain]{{\SN{#2}}_{\Hone[{#1}]}}
\newcommand{\D}{\mathrm{d}}
\newcommand{\ol}{\overline}
\newcommand{\be}{\begin{eqnarray}}
\newcommand{\ee}{\end{eqnarray}}
\newcommand{\ben}{\begin{eqnarray*}}
\newcommand{\een}{\end{eqnarray*}}
\newtheorem{assumption}[theorem]{\sc Assumption}
\newtheorem{remark}[theorem]{\sc Remark}
\newtheorem{example}[theorem]{\sc example}
\def\address#1{\expandafter\def\expandafter\@aabuffer\expandafter
	{\@aabuffer{\affiliationfont{#1}}\relax\par
	\vspace*{13pt}}}
\newcommand{\cdotbf}{\boldsymbol{\cdot}}
\title{Finite element error estimates for the nonlinear Schr\"{o}dinger-Poisson model\thanks{LSEC, Institute of Computational Mathematics and Scientific/Engineering Computing, Academy of Mathematics and Systems Science, Chinese Academy of Sciences; School of Mathematical Science,
University of Chinese Academy of Sciences, Beijing, 100190, China. Email addresses: tcui@lsec.cc.ac.cn (T.C.), luwenhao@lsec.cc.ac.cn (W.L.), pannaiyan@lsec.cc.ac.cn (N.P.), and zwy@lsec.cc.ac.cn(W.Z.)}}
\author{Tao Cui\thanks{T. Cui was supported in part by National Key R \& D
    Program of China 2019YFA0709600 and 2019YFA0709602, China NSF grant 12171468, and Beijing Natural Science Foundation under the grant Z220003.}
  \and Wenhao Lu\thanks{W. Lu was supported in part by National Key R \& D
    Program of China 2019YFA0709600 and 2019YFA0709602.}
  \and Naiyan Pan\thanks{N. Pan was supported by China NSF grant 11831016.}
  \and Weiying Zheng\thanks{W. Zheng was supported in part by China NSF grant 11831016, National Key R \& D
    Program of China 2019YFA0709600 and 2019YFA0709602, and the National Science Fund for Distinguished Young Scholars 11725106.}}
\begin{document}
\maketitle

\begin{abstract}
In this paper, we study a priori error estimates for the finite element approximation of the nonlinear Schr\"{o}dinger-Poisson model. The electron density is defined by an infinite series over all eigenvalues of the Hamiltonian operator. To establish the error estimate, we present a unified theory of error estimates for a class of nonlinear problems. The theory is based on  three conditions: 1) the original problem has a solution $u$ which is the fixed point of a compact operator $\Ca$, 2) $\Ca$ is Fr\'{e}chet-differentiable at $u$ and $\Ci-\Ca'[u]$ has a bounded inverse  in a neighborhood of $u$, and 3) there exists an operator $\Ca_h$ which converges to $\Ca$ in the neighborhood of $u$. The theory states that $\Ca_h$ has a fixed point $u_h$ which solves the approximate problem. It also gives
the error estimate between $u$ and $u_h$, without assumptions on the well-posedness of the approximate problem. We apply the unified theory to the finite element approximation of the Schr\"{o}dinger-Poisson model and obtain optimal error estimate between the numerical solution and the exact solution. Numerical experiments are presented to verify the convergence rates of numerical solutions.
\end{abstract}

\begin{keywords}
Schr\"{o}dinger-Poisson model, finite element method, a priori error estimate, eigenvalue problem, approximation theory of nonlinear problem.
\end{keywords}

\begin{AMS}
65M60, 65L15, 37L65
\end{AMS}

\section{Introduction}

One never-ending trend of the development of electronic components is to
reduce their sizes so that as many as possible semiconductor devices can be produced on a single chip. As the size of devices approaches the decanano length scale, quantum mechanical effects,
like the confinement in barrier structures or inversion layers as well as direct tunneling through the oxide, become prominent and must be considered in numerical simulations. Quantum-corrected macroscopic models have been studied extensively in the literature to incorporate quantum effects partially, without solving the microscopic Schr\"{o}dinger model \cite{anc87,anc89,ben98,de05,mu23,pin02}. However, for extremely small devices like quantum wire devices, quantum-mechanical methods are necessary to describe such systems so that quantum effects are taken into account adequately \cite{lan88,lau86}.

The Schr\"{o}dinger-Poisson model which incorporates all relevant quantum phenomena, is widely used in the simulation of quantum devices \cite{kai97}. The model is a nonlinear and strongly coupled system consisting of the stationary Schr\"{o}dinger equation and the Poisson equation for the electric potential. In 1993, Nier established the uniqueness and existence of the solution in \cite{nie93} and showed that the electron density operator $n: \zbHone\to H^{-1}(\Omega)$ is continuous and strictly monotone. For an electric potential $V\in\zbHone$, the electron density is defined by
\begin{align}\label{nV}
n[V]= \sum\limits_{l=1}^\infty f(\varepsilon_l-\varepsilon_F)\psi_l^2,
\end{align}
where $f>0$ stands for the thermodynamical equilibrium distribution function, $(\varepsilon_l,\psi_l)$ are eigenvalues and eigenfunctions of the Hamiltonian $\Ch_V:=-\Delta +V+V_0$ with $V_0$ being an applied potential, and $\varepsilon_F$ stands for the Fermi level.
In 1997,  Kaiser and Rehberg further improved the result of Nier for convex polyhedra or domains with $C^2$-smooth boundaries. They showed that the electron density operator $n$ is a Fr\'{e}chet-differentiable and strictly monotone operator from $\Ltwo$ to $\Ltwo$ \cite{kai97}.

Numerical methods for the Schr\"{o}dinger-Poisson model or its closely related models, Kohn-Sham and orbital-free models \cite{can12,gav07-1,gav07-2,lan10}, have been widely studied in the literature. However, works which give explicit convergence rates of numerical solutions are still seldom.
In  \cite{zha14}, Zhang, Cao, and Luo proposed a multi-scale asymptotic method for the stationary Schr\"{o}dinger-Poisson model with rapidly oscillating coefficients, where the electrons are assumed to occupy a bounded number of eigen-states $n= \sum_{l=1}^N f(\varepsilon_l-\varepsilon_F)\psi_l^2$. They also studied the finite element approximation of the homogenized model and proved that the numerical solution converges to the homogenized solution as the mesh size $h\to 0$, but the convergence rate is not obtained. In \cite{lan10}, Langwallner, Ortner, and S\"{u}li studied the linear finite element approximation to a non-convex orbital-free model, the Thomas-Fermi-von Weizs\"{a}cker (TFW) model, and obtained error estimates for numerical solutions. In \cite{can12}, Canc\'{e}s, Chakir, and Maday presented optimal convergence rate for the plane-wave discretization of the periodic TFW model. Numerical analysis for the Kohn-Sham models is more difficult than orbital-free models, since one has to deal with nonlinear eigenvalue problems in that case. Mostly,
people tend to study the case of integer occupation numbers, namely, $n= \sum_{l=1}^N \psi_l^2$.
In \cite{sur10}, the authors proved the convergence of ground state energies for finite element approximations, without giving convergence rate. In \cite{can12}, optimal a priori error estimates are established for the spectral approximation of the Kohn-Sham model under a coercivity assumption of the second-order optimality condition.
In \cite{che13}, the authors presented a priori error estimates for both ground state energies and numerical solutions.

The purpose of this paper is to establish a priori error estimates for finite element approximation of the
Schr\"{o}dinger-Poisson model in which the electron density is defined with a series of infinitely many eigen-states \eqref{nV}. To our best knowledge, explicit convergence rates are still unavailable in the literature for finite element approximations of the nonlinear eigenvalue problem. Generally,
the approximation of a nonlinear problem needs to establish three kinds of theories: the existence and local uniqueness of the exact solution $u$, the existence of the approximate solution $u_h$, and the estimation of the error $u-u_h$. The existence of $u_h$ is not implied by the existence of $u$ and could be more difficult than the latter. In some cases, the existence and local uniqueness of the approximate solution are treated as assumptions to estimate the approximation errors (see e.g. \cite{hao22}). The first objective of this paper is to present a unified theory of error estimates for nonlinear problems. The theory is established upon three conditions:
\begin{enumerate}[leftmargin=5mm]
\item The nonlinear problem has a solution $u$ which is the fixed point of a compact operator $\Ca$.

\item $\Ca$ is Fr\'{e}chet-differentiable at $u$ and $\Ci-\Ca'[u]$ has a bounded inverse in a neighborhood of $u$.

\item The approximate problem can be formulated into an operator equation $\Ca_hu_h = u_h$ where $\Ca_h$ converges to $\Ca$ uniformly in a neighborhood of $u$.
\end{enumerate}
The theory gives the existence of $u_h$ and a priori estimate of $u-u_h$ in terms of the approximation parameter $h$. The most attractive property of the theory is that no assumptions are made about the well-posedness of the approximate problem.

The second objective is to apply the unified theory to the finite element approximation of the
nonlinear Schr\"{o}dinger-Poisson model. The discrete problem is proposed upon two kinds of approximations:
\begin{enumerate}[leftmargin=5mm]
\item truncating the series \eqref{nV} into the sum of a finite number of terms, and

\item numerical discretization of the coupled problem.
\end{enumerate}
Therefore, the total error between the exact solution and the approximate solution is the addition of both the truncation error and the discretization error. Choosing the number of truncated terms $L_h=O(|\ln h|^{3/2})$, we show that the two kinds of errors are of the same order. Furthermore, we establish optimal error estimate for the numerical solution in the $H^1$ norm. In the last section, we present two numerical experiments to verify the convergence rates of numerical solutions.

The paper is organized as follows. In section 2, we present the unified theory of error estimates for nonlinear problems. In section 3, we introduce the Schr\"{o}dinger-Poisson model and verify two conditions of the abstract theorem. In section~4, we propose a finite element approximation to the Schr\"{o}dinger-Poisson model and prove some error estimates for eigenvalues and eigenfunctions. In section~5, we establish a priori error estimates for numerical solutions by using the unified theory. In section~6, we present two numerical experiments to verify the convergence rates of numerical solutions. In section~7, we draw some conclusions from the paper.

We shall adopt some notations of Banach spaces and their subsets.
\begin{itemize}[leftmargin=5mm]
\item $X$ and $Y$ denote two Banach spaces with $Y\subset X$.

\item $X'$ denotes the dual space of $X$. Particularly, $X'=H^{-1}(\Omega)$ if $X=\zbHone$.

\item $\Cl(X,Y)$ denotes the space of all linear and continuous operators from $X$ to $Y$, particularly, $\Cl(X):=\Cl(X,X)$.

\item $B(u,r)=\left\{w\in X: \N{v-w}_X \le r\right\}$ denotes the closed ball of $X$ (or $\zbHone$), where the radius of the ball is $r>0$ and the center is $u\in X$ (or $u\in\zbHone$).

\item $\Cb_r=\left\{w\in Y: \N{w}_Y < r\right\}$ denotes the open ball of $Y$ (or $\Ltwo$), where the radius of the ball is $r>0$ and the center is $0$.

\item $\Ltwo$ denotes the space of square integrable functions on $\Omega$. The inner product on $\Ltwo$ is denoted by $(u,v)$.

\item $H^m(\Omega)$ denotes the Sobolev space whose functions have square-integrable partial derivatives up to order $m\ge 0$, and $H^m_0(\Omega)\subset H^m(\Omega)$ denotes the subspace of functions with vanishing traces on $\partial\Omega$. The inner product of $\Hone$ is denoted by $(u,v)_1$.

\item $C_r>0$ denotes the generic constant which depends only on $\Omega$ and $r$, and $C>0$ denotes the generic constant which depends only on $\Omega$.
\end{itemize}

\section{A unified theory of error estimates for nonlinear problems}
\label{sec:framwork}

The purpose of this section is to prove an abstract theorem of error estimates for general nonlinear problems. Throughout this section, we assume that $X$ is compactly embedded into $Y$.
Suppose $F$: $X\to X'$ is a continuous operator and that the nonlinear operator equation
\begin{align}\label{eq:F}
F[u]=0 \quad \hbox{on}\;\; X,
\end{align}
has a solution $u\in X$.
Since we are interested in the approximation and error analysis for problem \eqref{eq:F}, it is reasonable to make the assumption.
\vspace{1mm}

\begin{assumption} \label{ass-1}
There exists a continuous operator $\Ca$: $Y\to X$ such that the solution $u$ is a fixed point of $\Ca$, namely,
\begin{align}\label{eq:A}
\Ca[u]=u\quad \hbox{in}\;\; X.
\end{align}
\end{assumption}

The assumption is rather mild since for most nonlinear problems, the existence of solution can be proven by means of fixed-point iterations.

Let $X_h$ be a subspace of $X$ and let $\Ca_h$: $Y\to X_h$ be a continuous operator which is an approximation of $\Ca$. We consider the approximate equation
\begin{align}\label{eq:Ah}
\Ca_h[u_h] =u_h \quad \hbox{in}\;\; X_h.
\end{align}
The purpose is to prove that the nonlinear problem \eqref{eq:Ah} possesses at least one solution, providing that $\Ca_h$ is sufficiently ``close'' to $\Ca$ as $h\to h_\infty$, where $h_\infty\in\bbR$ or $h_\infty=\pm\infty$. An error estimate between $u$ and $u_h$ will be presented. The proof is based on a quasi-Newton map.

\begin{lemma}\label{lem:compact}
Suppose $A$ is a bounded and closed subset of $X$. Let ${\rm cl}_YA$ denote the closure of $A$ in the norm of $Y$. Then ${\rm cl}_YA$ is compact in $Y$, and ${\rm cl}_YA$ is convex if $A$ is convex.
\end{lemma}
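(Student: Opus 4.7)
The plan is to use the compactness of the embedding $X \compemb Y$ together with a standard diagonal/approximation argument.

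First I would establish compactness of $\mathrm{cl}_Y A$ in $Y$. Take any sequence $\{y_n\}\subset\mathrm{cl}_Y A$. By definition of $Y$-closure, for each $n$ there exists $a_n\in A$ with $\N{y_n-a_n}_Y<1/n$. Since $A$ is bounded in $X$, the sequence $\{a_n\}$ is bounded in $X$, and the compact embedding $X\compemb Y$ yields a subsequence $\{a_{n_k}\}$ converging in $Y$ to some limit $y\in Y$. Combining this with $\N{y_{n_k}-a_{n_k}}_Y\to 0$ gives $y_{n_k}\to y$ in $Y$. Since $y$ is the $Y$-limit of $\{a_{n_k}\}\subset A$, we have $y\in\mathrm{cl}_Y A$. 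Thus every sequence in $\mathrm{cl}_Y A$ has a subsequence converging to an element of $\mathrm{cl}_Y A$, which is sequential compactness in the metric space $Y$ and therefore compactness.

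Next I would verify that $\mathrm{cl}_Y A$ is convex when $A$ is convex. Pick $y_1,y_2\in\mathrm{cl}_Y A$ and $t\in[0,1]$. Choose approximating sequences $\{a_n\},\{b_n\}\subset A$ with $a_n\to y_1$ and $b_n\to y_2$ in $Y$. By convexity of $A$, the convex combinations $ta_n+(1-t)b_n$ lie in $A$, and by continuity of the vector space operations in $Y$ they converge to $ty_1+(1-t)y_2$. Hence $ty_1+(1-t)y_2\in\mathrm{cl}_Y A$.

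The main point to be careful about is the first step: one must verify that the limit produced by the compact embedding argument actually lies in $\mathrm{cl}_Y A$, rather than merely in $Y$. This is handled by the triangle inequality combined with $\N{y_n-a_n}_Y<1/n$, which allows us to replace the original sequence in $\mathrm{cl}_Y A$ by a sequence in $A$ with the same $Y$-limit. The closedness of $A$ in $X$ is not actually needed for either conclusion; only boundedness of $A$ in $X$ (to invoke the compact embedding) and, for the second part, convexity of $A$. No further obstacles are expected.
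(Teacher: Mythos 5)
Your proposal is correct and follows essentially the same route as the paper: approximate each point of $\mathrm{cl}_Y A$ by an element of $A$ to within $1/n$, use boundedness of $A$ in $X$ and the compact embedding $X\hookrightarrow Y$ to extract a $Y$-convergent subsequence, and observe that the limit lies in $\mathrm{cl}_Y A$; the convexity part is likewise identical. Your side remark that closedness of $A$ in $X$ is not actually used is accurate — the paper's hypothesis is slightly stronger than needed.
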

\begin{proof}
Let $\{v_n\}\subset {\rm cl}_Y A$ be a bounded sequence. It suffices to extract a subsequence which converges in ${\rm cl}_Y A$. For each $v_n$, there exists a sequence $\{v_{n,k}\}\subset A$ such that
\ben
\lim_{k\to \infty} \big\|v_{n,k}-v_n\big\|_Y=0.
\een
For any $n\geq 1$, there exists an integer $N(n)$ large enough such that
$\big\|v_{n,N(n)}-v_n\big\|_Y<1/n$.
Since $\{v_{n,N(n)}\}\subset A$, there exists a subsequence $\{v_{n_k,N(n_k)}\}$ which converges to some $v\in {\rm cl}_YA$ in the norm of $Y$.
It follows that
\ben
\lim_{k\to \infty} \|v_{n_k}-v\|_Y\leq \lim_{k\to \infty}\|v_{n_k}-v_{n_k,N(n_k)}\|_Y
+\lim_{k\to \infty}\|v_{n_k,N(n_k)}-v\|_Y =0.
\een
This means that ${\rm cl}_YA$ is compact in $Y$.

To prove the convexity of ${\rm cl}_Y A$, we pick $v,w \in {\rm cl}_Y A$ and $\{v_{n}\},\{w_{n}\}\subset A$ which satisfy
\ben
\lim_{n\to \infty} \big(\|v_{n}-v\|_Y+ \|w_{n}-w\|_Y\big) =0.
\een
For any $\lambda\in (0,1)$, $\lambda v_{n}+(1-\lambda)w_{n} \in A$ implies
$\lambda v+(1-\lambda)w \in {\rm cl}_Y A$. The proof is finished.
\end{proof}
\vspace{1mm}

\begin{theorem} \label{thm:compact}
Let $\Ci$ be the identical operator on $X$.
Suppose the three conditions below hold.
\begin{itemize}[leftmargin=5mm]
\item {\bf Condition~1.} $\Ca$ is Fr\'{e}chet-differentiable at $u$ and the Fr\'{e}chet derivative $\Ca'[u]:Y\to X$ is a bounded and linear operator defined via the limit
\begin{align}\label{IDA-1}
\lim_{\N{v-u}_Y\to 0} \frac{\N{\Ca[v]-\Ca[u]-\Ca'[u](v-u)}_X}{\N{v-u}_Y} =0.
\end{align}

\item {\bf Condition~2.} There exists an $r>0$ such that $\Ci-\Ca'[u]$ restricted on $B(u,r)$ has a bounded inverse, namely, there exist two constants $M_0$ and $M_1$ such that
\begin{align}\label{IDA}
\sup_{0\ne v\in B(u,r)}\frac{\N{\big(\Ci-\Ca'[u]\big)v}_X}{\N{v}_X}\leq M_0,\qquad
\sup_{0\ne v\in B(u,r)}\frac{\N{v}_X}{\N{\big(\Ci-\Ca'[u]\big)v}_X}
\leq M_1.
\end{align}

\item {\bf Condition~3.} $\Ca_h$ admits the local approximation property
\begin{align}\label{epsh}
\lim_{h\to h_\infty} \epsilon(h) =0, \quad \hbox{where}\;\;
\epsilon(h):=\sup_{v\in B(u,r)}{\N{\Ca_h[v]-\Ca[v]}_X}.
\end{align}
\end{itemize}
There exist an $r_0\in\big(0,(1+M_0M_1)^{-1}r\big]$ and an $h_0\in\bbR$ such that,
for any $h$ between $h_0$ and $h_\infty$,
problem~\eqref{eq:Ah} has a solution $u_h\in B(u,r_0)$ which admits the error estimates
\begin{align}\label{abs-err}
\N{u_h-u}_X \le 2M_1\N{\Ca_h[u_h]-\Ca[u_h]}_X
\le (1+2M_0M_1)\N{u_h-u}_X.
\end{align}
\end{theorem}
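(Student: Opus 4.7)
The plan is a Schauder fixed-point argument on a quasi-Newton map. I would introduce
\[
T_h[v] := v - (\Ci-\Ca'[u])^{-1}(v-\Ca_h[v]) = (\Ci-\Ca'[u])^{-1}\big(\Ca_h[v]-\Ca'[u]v\big),
\]
whose fixed points coincide with the solutions of $\Ca_h[v]=v$ (since $\Ci-\Ca'[u]$ is injective by Condition~2) and therefore automatically lie in $X_h$. The second expression makes clear that $T_h$ is a compact perturbation: $\Ca_h$ and $\Ca'[u]$ both map $Y\to X$ continuously, so on $B(u,r_0)\subset X$, which is relatively compact in $Y$ by $X\compemb Y$ and Lemma~\ref{lem:compact}, their images are relatively compact in $X$; the bounded linear operator $(\Ci-\Ca'[u])^{-1}$ then preserves this compactness, so $T_h$ is continuous and sends $B(u,r_0)$ into a relatively compact subset of $X$.

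To invoke Schauder on the closed convex set $B(u,r_0)$, I need $T_h\big(B(u,r_0)\big)\subset B(u,r_0)$. Using $\Ca[u]=u$, the key identity is
\[
T_h[v]-u = (\Ci-\Ca'[u])^{-1}\big[(\Ca_h[v]-\Ca[v]) + (\Ca[v]-\Ca[u]-\Ca'[u](v-u))\big].
\]
Condition~3 bounds the first bracketed term by $\epsilon(h)$, while Fréchet differentiability (Condition~1), paired with the continuous embedding $X\hookrightarrow Y$ with constant $c_e$, makes the second term $o(\|v-u\|_X)$ as $r_0\to 0$. I would then pick $r_0\le(1+M_0M_1)^{-1}r$ small enough that the remainder is absorbed into a factor $1/4$, and choose $h_0$ so that $M_1\epsilon(h)\le 3r_0/4$. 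Schauder then produces $u_h\in B(u,r_0)$ with $\Ca_h[u_h]=u_h$.

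For the error estimate \eqref{abs-err}, I would rewrite, using $u_h=\Ca_h[u_h]$ and $u=\Ca[u]$,
\[
(\Ci-\Ca'[u])(u_h-u) = (\Ca_h[u_h]-\Ca[u_h]) + R(u_h),\qquad R(u_h):=\Ca[u_h]-\Ca[u]-\Ca'[u](u_h-u).
\]
Shrinking $r_0$ if necessary so that $\|R(u_h)\|_X\le(2M_1)^{-1}\|u_h-u\|_X$ (possible by Fréchet differentiability together with $X\compemb Y$), the lower bound in \eqref{IDA} yields $\|u_h-u\|_X\le 2M_1\|\Ca_h[u_h]-\Ca[u_h]\|_X$, i.e.\ the first inequality of \eqref{abs-err}. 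The upper bound $M_0$ in \eqref{IDA} gives $\|\Ca_h[u_h]-\Ca[u_h]\|_X\le (M_0+(2M_1)^{-1})\|u_h-u\|_X$, which after multiplying through by $2M_1$ is exactly the second inequality of \eqref{abs-err}.

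The main obstacle is threading the choice of $r_0$ consistently: it must simultaneously make the Fréchet remainder absorbable in two separate estimates (once for the self-map property, once for the error bound), respect the constraint $r_0\le(1+M_0M_1)^{-1}r$, and remain compatible with the regime of $h$ in which $\epsilon(h)$ is controlled. The mechanism that ultimately makes everything close is the compact embedding $X\compemb Y$: it both furnishes the compactness required by Schauder and upgrades the $\|\cdot\|_Y$-rate in Condition~1 into a genuine $\|\cdot\|_X$-rate usable in the bounds.
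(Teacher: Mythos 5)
Your proof is correct and follows essentially the same route as the paper: the same quasi-Newton map $\Cn[v]=(\Ci-\Ca'[u])^{-1}\big(\Ca_h[v]-\Ca'[u]v\big)$, the identical decomposition $\Cn[v]-u=(\Ci-\Ca'[u])^{-1}\big(\Cr_u[v]+\Ca_h[v]-\Ca[v]\big)$ with the same choice of $r_0$ to absorb the Fr\'echet remainder and $h_0$ to control $\epsilon(h)$, and the same manipulations for \eqref{abs-err}. The only cosmetic difference is in how Schauder is invoked: the paper passes to the $Y$-compact convex set ${\rm cl}_Y B(u,r_0)$ via Lemma~\ref{lem:compact}, whereas you apply the compact-operator form of Schauder directly on the closed convex set $B(u,r_0)\subset X$; both hinge on $X\compemb Y$ and are equivalent.
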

\begin{proof}
First we define the residual operator for the first-order Taylor's formula at $u$
\begin{align}\label{defR}
\Cr_u[v] = \big(u-\Ca[u]\big)-\big(v-\Ca[v]\big)-\big(\Ci-\Ca'[u]\big)(u-v) \qquad
\forall\,v\in B(u,r).
\end{align}
Since $u-\Ca[u]$ is Fr\'{e}chet-differentiable at $u$, \eqref{IDA-1} implies
\begin{align}\label{limR}
\lim_{\N{v-u}_Y\to 0}\frac{\N{\Cr_u[v]}_X}{\N{u-v}_Y} =0.
\end{align}
There exists an $r_0 <(1+M_0M_1)^{-1}r$ such that
\begin{align}\label{estR}
\N{\Cr_u[v]}_X \le \N{u-v}_Y/(2M_1) \qquad \forall\, v\in B(u,r_0).
\end{align}
By \eqref{epsh}, there exists an $h_0\in\bbR$ such that, for any $h$ between $h_0$ and $h_\infty$,
\begin{align}\label{estA-Ah}
\sup_{v\in B(u,r)}\N{\Ca_h[v]-\Ca[v]}_X \le r_0 /(2M_1).
\end{align}

Next we define an operator $\Cn$ on ${\rm cl}_Y B(u,r_0)$ as follows
\begin{equation}\label{opr-N}
\Cn[v]= \big(\Ci-\Ca'[u]\big)^{-1}\Ca_h[v] -\big(\Ci-\Ca'[u]\big)^{-1}\Ca'[u](v).
\end{equation}
For any $v\in B(u,r_0)$, by $u=\Ca[u]$ and direct calculations, we find that
\begin{align}
\Cn[v]-u=\,& v-u-\big(\Ci-\Ca'[u]\big)^{-1}\big(v-\Ca_h[v]\big)\notag \\
=\,& \big(\Ci-\Ca'[u]\big)^{-1}\big\{\big(\Ci-\Ca'[u]\big)(v-u)-v+\Ca_h[v]+u-\Ca[u]\big\} \notag \\
=\,& \big(\Ci-\Ca'[u]\big)^{-1}\big\{R_u[v]+\Ca_h[v]-\Ca[v]\big\}. \label{Nv-u}
\end{align}
Applying \eqref{estR} and \eqref{estA-Ah} to the third equality of \eqref{Nv-u} yields
\begin{align*}
\N{\Cn[v]-u}_X \le\, M_1 \big\{\N{R_u[v]}_X+\N{\Ca_h[v]-\Ca[v]}_X\big\} \le r_0
\qquad \forall\,v\in B(u,r_0).
\end{align*}
The inequality implies that $\Cn$ maps $B(u,r_0)$ continuously into itself.

Next we prove that $\Cn$ also maps ${\rm cl}_Y B(u,r_0)$ into ${\rm cl}_Y B(u,r_0)$. For any $v\in {\rm cl}_Y B(u,r_0)$, there is a sequence $\{v_n\}\subset B(u,r_0)$ which converges to $v$ in the norm of $Y$. Then $\{\Cn[v_n]\}\subset B(u,r_0)$. Since both $\Ca_h$ and $\Ca'[u]$ are continuous operators, we have
\begin{align*}
\lim_{n\to \infty}\N{\Cn[v_n]-\Cn[v]}_Y
\le M_1\lim_{n\to \infty}
\N{\Ca_h[v_n]-\Ca_h[v] - \Ca'[u](v_n-v)}_X =0.
\end{align*}
This implies $\Cn[v]\in {\rm cl}_Y B(u,r_0)$. Therefore, $\Cn$ maps ${\rm cl}_Y B(u,r_0)$ into itself.
Since $X$ is compactly embedded into $Y$, by Lemma~\ref{lem:compact}, ${\rm cl}_Y B(u,r_0)$ is a nonempty, convex, and compact subset of $Y$.
By Schauder's fixed point theorem (cf. \cite[Page 502, Theorem~3]{eva98}), there exists a $u_h\in {\rm cl}_Y B(u,r_0)$ satisfying $u_h=\Cn[u_h]$.
Then \eqref{opr-N} implies $\Ca_h[u_h]=u_h$. We conclude the existence of solution to the approximate problem \eqref{eq:Ah}.

It is left to prove \eqref{abs-err}. From \eqref{Nv-u} and \eqref{estR}, we have
\begin{align*}
\N{u_h-u}_X =\,& \N{\Cn[u_h]-u}_X
= \N{\big(\Ci-\Ca'[u]\big)^{-1}\big(R_u[u_h]+\Ca_h[u_h]-\Ca[u_h]\big)}_X \notag \\
\le\,& \frac{1}{2}\N{u-u_h}_X +M_1\N{\Ca_h[u_h]-\Ca[u_h]}_X .
\end{align*}
This shows the lower bound of the error estimate. From \eqref{Nv-u} and \eqref{estR}, we also have
\begin{align*}
\N{\Ca_h[u_h]-\Ca[u_h]}_X \le  \N{\big(\Ci-\Ca'[u]\big)(u_h-u)-R_u[u_h]}_X
\le \big\{M_0+(2M_1)^{-1}\big\}\N{u_h-u}_X.
\end{align*}
The proof is finished.
\end{proof}
\vspace{1mm}

%
%
%
%

The previous theorem gives the error estimate in the norm of $X$. We can easily obtain the error estimate with respect to the weaker norm $\|\cdot\|_Y$.

\begin{corollary} \label{cor:compact-low-norm}
Suppose the conditions 1) and 3) in Theorem~\ref{thm:compact} hold, and moreover, there exist two positive constants $M_0'$ and $M_1'$ such that
\begin{align}\label{M0M1-1}
\sup_{v\in B(u,r)}\frac{\N{\big(\Ci-\Ca'[u]\big)v}_Y}{\N{v}_Y}\leq M_0',\qquad
\sup_{v\in B(u,r)}\frac{\N{v}_Y}{\N{\big(\Ci-\Ca'[u]\big) v}_Y}
\leq M_1'.
\end{align}
Then $u_h$ admits the error estimates
\begin{align}\label{abs-err-low-norm}
\N{u_h-u}_Y \le 2M_1'\N{\Ca_h[u_h]-\Ca[u_h]}_Y
\le (1+2M_0'M_1')\N{u_h-u}_Y.
\end{align}
\end{corollary}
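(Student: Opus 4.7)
The plan is to repeat the analysis of Theorem~\ref{thm:compact} almost verbatim, but measure every term in the norm of $Y$. By assumption, conditions~1) and 3) continue to supply the quantitative Taylor expansion at $u$ and the closeness of $\Ca_h$ to $\Ca$ near $u$; together with condition~2) of Theorem~\ref{thm:compact}, these guarantee the existence of the fixed point $u_h\in B(u,r_0)$ via Schauder's theorem, and in particular deliver the identity
\begin{align*}
u_h-u = \big(\Ci-\Ca'[u]\big)^{-1}\bigl\{\Cr_u[u_h] + \Ca_h[u_h]-\Ca[u_h]\bigr\},
\end{align*}
which is the natural starting point.

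First I would take the $Y$-norm of both sides of this identity. Since $\Ci-\Ca'[u]$ is a linear operator, the bound $M_1'$ in \eqref{M0M1-1} translates into $\N{(\Ci-\Ca'[u])^{-1}w}_Y\le M_1'\N{w}_Y$ for admissible $w$, so that
\begin{align*}
\N{u_h-u}_Y \le M_1'\N{\Cr_u[u_h]}_Y + M_1'\N{\Ca_h[u_h]-\Ca[u_h]}_Y.
\end{align*}
The main obstacle is controlling the residual $\Cr_u[u_h]$ in the \emph{weaker} norm $Y$: the Fr\'echet differentiability hypothesis \eqref{IDA-1} delivers only $\N{\Cr_u[v]}_X/\N{v-u}_Y\to 0$, with the numerator still in the stronger norm. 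I would resolve this by invoking the compact, hence continuous, embedding $X\hookrightarrow Y$, which yields $\N{\Cr_u[v]}_Y \le C\N{\Cr_u[v]}_X$ and therefore $\N{\Cr_u[v]}_Y/\N{v-u}_Y\to 0$ as $\N{v-u}_Y\to 0$. Shrinking $r_0$ further if necessary, I may assume $\N{\Cr_u[v]}_Y \le \N{v-u}_Y/(2M_1')$ for all $v\in B(u,r_0)$; absorbing the resulting $\frac12\N{u_h-u}_Y$ into the left-hand side gives the first inequality of \eqref{abs-err-low-norm}.

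For the reverse inequality, I would use $u=\Ca[u]$, $u_h=\Ca_h[u_h]$, and the definition \eqref{defR} to rewrite
\begin{align*}
\Ca_h[u_h]-\Ca[u_h] = u_h-\Ca[u_h] = \big(\Ci-\Ca'[u]\big)(u_h-u)-\Cr_u[u_h].
\end{align*}
Taking the $Y$-norm, applying the $M_0'$ bound from \eqref{M0M1-1} to the linear term, and using the refined residual estimate $\N{\Cr_u[u_h]}_Y\le\N{u_h-u}_Y/(2M_1')$ established above, I obtain
\begin{align*}
\N{\Ca_h[u_h]-\Ca[u_h]}_Y \le \bigl(M_0'+(2M_1')^{-1}\bigr)\N{u_h-u}_Y,
\end{align*}
which after multiplication by $2M_1'$ is precisely the upper bound in \eqref{abs-err-low-norm}. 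The whole argument is a mechanical transcription of the Theorem~\ref{thm:compact} bookkeeping from $X$ to $Y$; the only nontrivial ingredient is the transfer of the Taylor remainder smallness from the $X$-norm to the $Y$-norm via the embedding.
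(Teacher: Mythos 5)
Your proof is correct and follows essentially the same route as the paper's: start from the identity $u_h-u=\big(\Ci-\Ca'[u]\big)^{-1}\big\{\Cr_u[u_h]+\Ca_h[u_h]-\Ca[u_h]\big\}$, apply the $Y$-norm bound $M_1'$ to $\big(\Ci-\Ca'[u]\big)^{-1}$, control the Taylor remainder so that it can be absorbed, and then reverse the decomposition with the $M_0'$ bound. The only point worth noting is that you make explicit the step the paper leaves implicit, namely why the $X$-norm smallness of $\Cr_u[v]$ from \eqref{IDA-1}\eqref{limR} transfers to $Y$-norm smallness (continuous embedding $X\hookrightarrow Y$, then shrink $r_0$); the paper handles the same issue by the ``WLOG take $M_1\ge M_1'$'' device, but your version is cleaner and closes the gap completely.
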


\begin{proof}
Given the constant $M_1'$, we can choose the constant $M_1$ in \eqref{IDA} large enough such that $M_1\ge M_1'$ without loss of generality.
For any $h$ between $h_0$ and $h_\infty$ and any $v\in B(u,r_0)$, using \eqref{epsh} and \eqref{limR}, we have
\begin{align*}
\N{\Cr_u[v]}_Y \le \N{u-v}_Y/(2M_1'),\qquad
\N{\Ca_h[v]-\Ca[v]}_Y \le r_0/(2M_1') .
\end{align*}
Let $\Cr_u$ be the residual operator defined in \eqref{defR}.
By \eqref{M0M1-1} and arguments similar to the proof of Theorem~\ref{thm:compact}, we have
\begin{align*}
\N{u_h-u}_Y =\,& \big\|\big(\Ci-\Ca'[u]\big)^{-1}\big(R_u[u_h]+\Ca_h[u_h]-\Ca[u_h]\big)\big\|_Y
\le  M_1'\N{R_u[u_h]+\Ca_h[u_h]-\Ca[u_h]}_Y.
\end{align*}
We get $\N{u_h-u}_Y \le 2M_1'\N{\Ca_h[u_h]-\Ca[u_h]}_Y$. The second inequality of \eqref{abs-err-low-norm} can be proven similarly as in the proof of Theorem~\ref{thm:compact}.
\end{proof}

\section{The Schr\"{o}dinger-Poisson model}

Next we apply the abstract theory in Theorem~\ref{thm:compact} to the nonlinear Schr\"{o}dinger-Poisson model and establish a priori error estimates for the Galerkin finite element approximation of the problem. Suppose $\Omega$ is a bounded convex polyhedron in $\bbR^3$.

\subsection{The model problem}

The Schr\"{o}dinger-Poisson model is the coupling of the Poisson equation of the electric potential and the stationary Schr\"{o}dinger equation.
After proper nondimensionalization, the Poisson equation has the form
\begin{equation}\label{eq:Poisson}
-\Delta V = n[V]- n_D\quad{\rm in}\;\; \Omega,\qquad
V=0\quad \hbox{on}\;\; \partial \Omega,
\end{equation}
where $V$ denotes the electrostatic potential, $n_D\in L^2(\Omega)$ is the doping profile, and $n:\Ltwo\to\Ltwo$ is the electron density operator. Suppose $V_0\in\Linf$ is an applied potential.
The Hamiltonian of the system is defined as $\Ch_V  = -\Delta +V+V_0$.
The eigenvalue problem of the Schr\"{o}dinger equation has the form
\begin{equation}\label{eq:eigen}
\Ch_V \psi_l=\varepsilon_l\psi_l \quad {\rm in}\;\; \Omega,\qquad
\psi_l=0 \quad \hbox{on}\;\; \partial \Omega,\qquad
l=1,2,\cdots,
\end{equation}
where $\varepsilon_l,\psi_l$ are the eigenvalues and eigenfunctions of $\Ch_V$, respectively, satisfying
\ben
\varepsilon_1<\varepsilon_2\le \cdots\le \varepsilon_l\le \cdots, \qquad
\NLtwo{\psi_l} =1.
\een

For clearness, we consider two typical forms of the thermodynamical equilibrium distribution function in this paper, the Boltzmann distribution and the Fermi-Dirac distribution,
\begin{equation}\label{eq:f}
f(t)= f_0e^{-\mu t}, \qquad
f(t)= f_0/(1+e^{\mu t}), \qquad f_0 >0.
\end{equation}
Clearly $f\in C^\infty(\bbR)$ and satisfies assumption~(d) of \cite[Section~1]{kai97}. With slight modifications, our theory can be extend to more general forms of $f$ subject to this assumption. We write $\varepsilon_l=\varepsilon_l[V]$ to specify the dependency of $\varepsilon_l$ on $V$.
The electron density is defined by
\begin{equation}\label{eq:density}
n[V]=\sum^{\infty}_{l=1}f(\varepsilon_l[V]-\varepsilon_F[V])\SN{\psi_l}^2.
\end{equation}
Here $\varepsilon_F=\varepsilon_F[V]$ denotes the Fermi level which depends on $V+V_0$. Given the total number of electrons $N_0$, $\varepsilon_F$ is determined through the conservation of electrons (see \cite{kai97})
\begin{equation}\label{eq:epsF}
N_0 =\sum^{\infty}_{l=1}f(\varepsilon_l[V]-\varepsilon_F[V]).
\end{equation}
In this way, \eqref{eq:Poisson}--\eqref{eq:epsF} forms a nonlinear and strongly coupled system.

\begin{lemma}
Suppose the geometric multiplicity of the eigenvalue $\varepsilon_l$ is $q$ and define the eigenspace associated with $\varepsilon_l$ as $X_l= \big\{w\in\zbHone: \Ch_V w =\varepsilon_l w\big\}$. Then the definition of $n[V]$ does not depend on the choice of an orthonormal basis of $X_l$.
\end{lemma}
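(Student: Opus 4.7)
The plan is to reduce the claim to a standard linear-algebra fact: for any finite-dimensional subspace $W\subset\Ltwo$ equipped with any two $\Ltwo$-orthonormal bases, the pointwise sum of squared moduli of the basis elements is the same almost everywhere in $\Omega$. Once this is established, the value of $n[V]$ is forced to be invariant under the basis choice within each eigenspace.

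First, rewrite the series in \eqref{eq:density} by grouping indices that share a common eigenvalue. Because the enumeration $\varepsilon_1<\varepsilon_2\le\cdots$ repeats degenerate eigenvalues, the geometric multiplicity~$q$ of $\varepsilon_l$ corresponds to a consecutive block of indices $\{l,l+1,\ldots,l+q-1\}$ on which $f(\varepsilon_{\cdot}[V]-\varepsilon_F[V])$ is the single value $f(\varepsilon_l[V]-\varepsilon_F[V])$. The contribution of this block to $n[V]$ is therefore $f(\varepsilon_l-\varepsilon_F)\sum_{j=0}^{q-1}|\psi_{l+j}|^2$, and it suffices to show that $\sum_{j=0}^{q-1}|\psi_{l+j}|^2$ is an invariant of $X_l$.

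Second, fix any other $\Ltwo$-orthonormal basis $\{\tilde\psi_{l+j}\}_{j=0}^{q-1}$ of $X_l$, and let $U=(u_{ij})$ denote the (unitary) change-of-basis matrix, so that $\tilde\psi_{l+i}=\sum_{j=0}^{q-1}u_{ij}\psi_{l+j}$. Expanding, swapping the finite sums, and invoking $U^*U=I$ yields
\[
\sum_{i=0}^{q-1}|\tilde\psi_{l+i}|^2 = \sum_{j,k=0}^{q-1}\Big(\sum_{i=0}^{q-1}u_{ij}\overline{u_{ik}}\Big)\psi_{l+j}\overline{\psi_{l+k}} = \sum_{j,k=0}^{q-1}\delta_{jk}\psi_{l+j}\overline{\psi_{l+k}}=\sum_{j=0}^{q-1}|\psi_{l+j}|^2
\]
pointwise almost everywhere on $\Omega$. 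Equivalently, $\sum_{j=0}^{q-1}|\psi_{l+j}|^2$ equals the diagonal of the integral kernel of the orthogonal $\Ltwo$-projector onto $X_l$, which is manifestly basis-free.

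Since each group in the regrouped series is basis-independent and the regrouping pools only finitely many terms that are already adjacent in the original ordering, the value of the infinite sum $n[V]$ is unaffected by the choice of orthonormal basis within any eigenspace. No substantial obstacle is anticipated; the essential point is simply to recognize that $f(\varepsilon_l-\varepsilon_F)$ is constant on each eigenspace-block, so that the basis ambiguity is confined to the purely algebraic identity displayed above.
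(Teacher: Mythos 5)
Your proof is correct and follows essentially the same argument as the paper: write one orthonormal basis of $X_l$ in terms of another, read off from the pairwise orthonormality relations that the change-of-basis matrix is unitary, and conclude that $\sum_{j=0}^{q-1}|\psi_{l+j}|^2$ is a basis-free quantity attached to the eigenspace. One small observation: you correctly phrase the orthonormality in $L^2(\Omega)$, consistent with the normalization $\NLtwo{\psi_l}=1$ used in the density formula, whereas the paper's proof writes the $H^1$ inner product $(\cdot,\cdot)_1$ at the corresponding step, which appears to be a minor slip.
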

\begin{proof}
Let $\{\psi_{l+j}: j=0,\cdots,q-1\}$ be an orthonormal basis of $X_l$, satisfying $(\psi_{l+i},\psi_{l+j})_1=\delta_{ij}$. For notational convenience, we omit the dependency on $V$ and rewrite \eqref{eq:density} as follows
\begin{equation*}
n=n_l +\sum_{k=1}^{l-1}f(\varepsilon_k-\varepsilon_F) \psi_k^2
+\sum^{\infty}_{k=l+q}f(\varepsilon_k-\varepsilon_F)\psi_k^2,\qquad
n_l:=f(\varepsilon_{l}-\varepsilon_F)\sum^{q-1}_{i=0}\psi_{l+i}^2.
\end{equation*}
Let $\{w_{l+j}: j=0,\cdots,q-1\}$ be another orthonormal basis
of $X_{l}$. Each $\psi_{l+i}$ can be represented as
\ben
\psi_{l+i} = \sum_{k=0}^{q-1} \alpha_{i,k} w_{l +k},\qquad
\sum_{k=0}^{q-1}\alpha_{i,k}\alpha_{j,k}
=(\psi_{l+i},\psi_{l+j})_1= \delta_{ij}.
\een
This indicates that $\bbA =(\alpha_{i,k})_{i,k=0}^{q-1}$ is a unitary matrix. It is clear that
\begin{equation*}
\sum^{q-1}_{i=0}\psi_{l+i}^2
=\sum^{q-1}_{i=0}\bigg|\sum^{q-1}_{k=0}\alpha_{i,k} w_{l +k}\bigg|^2
=\sum^{q-1}_{k=0} w_{l +k}^2.
\end{equation*}
Therefore, neither $n_l$ nor $n$ depends on the choice of the orthonormal basis of $X_l$.
\end{proof}

Each eigenvalue of the Schr\"{o}dinger equation can be viewed as an operator from $\Ltwo$ to $\bbR$.
The next lemma is useful and shows that the operator $\varepsilon_l:\Ltwo\to\bbR$ is continuous.

\begin{lemma}\label{lem:cont-eigen}
Suppose $u,w\in\Ltwo$ and $\varepsilon_l[u]$, $\varepsilon_l[w]$ are the eigenvalues of $\Ch_u$ and $\Ch_w$, respectively. Then for any $l\ge 1$,
\ben
\lim\limits_{\NLtwo{w-u}\to 0} \SN{\varepsilon_l[w]-\varepsilon_l[u]}= 0 .
\een
\end{lemma}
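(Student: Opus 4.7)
The plan is to use the Courant--Fischer min--max characterization of eigenvalues,
$$
\varepsilon_l[V]=\min_{\substack{S\subset\zbHone\\\dim S=l}}\max_{\substack{\psi\in S\\\NLtwo{\psi}=1}}\bigl(\NLtwo{\nabla\psi}^2+((V+V_0)\psi,\psi)\bigr),
$$
because the only $V$-dependent term in the Rayleigh quotient is the quadratic form $((V+V_0)\psi,\psi)$, and moving from $u$ to $w$ contributes only $\int_\Omega(w-u)\psi^2\,\D\Bx$. By H\"{o}lder this is bounded by $\NLtwo{w-u}\,\NLp{\psi}{4}^2$, and since $\Omega\subset\bbR^3$ the Sobolev embedding $\zbHone\hookrightarrow\Lp{4}$ lets me dominate $\NLp{\psi}{4}^2$ by $\NHone{\psi}^2$. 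The whole proof thus reduces to producing a uniform $L^4$-bound on suitable test subspaces.

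First I would prove the upper estimate by testing $\varepsilon_l[w]$ with $S^u_l:=\Span{\psi_1[u],\dots,\psi_l[u]}$. For every unit vector $\psi\in S^u_l$ one has $\NLtwo{\nabla\psi}^2+((u+V_0)\psi,\psi)\le\varepsilon_l[u]$, so
$$
\varepsilon_l[w]\le\varepsilon_l[u]+\NLtwo{w-u}\sup_{\substack{\psi\in S^u_l\\\NLtwo{\psi}=1}}\NLp{\psi}{4}^2.
$$
Since $S^u_l$ is a \emph{fixed} finite-dimensional subspace of $\zbHone$, the supremum is a finite constant $C^u_l$ depending only on $u$, $V_0$, $\Omega$, and $l$, which yields $\varepsilon_l[w]-\varepsilon_l[u]\le C^u_l\NLtwo{w-u}$.

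The main obstacle is the reverse inequality, because the natural test subspace $S^w_l:=\Span{\psi_1[w],\dots,\psi_l[w]}$ depends on $w$, so I need its unit vectors to be uniformly bounded in $L^4$ as $w$ varies near $u$. To close the loop I would bootstrap from the eigen-equation: expanding $\psi=\sum_{k\le l}c_k\psi_k[w]$ with $\NLtwo{\psi}=1$ gives
$$
\NLtwo{\nabla\psi}^2=\textstyle\sum_k c_k^2\,\varepsilon_k[w]-((w+V_0)\psi,\psi)\le\varepsilon_l[w]+\NLtwo{w}\,\NLp{\psi}{4}^2+\NLinf{V_0}.
$$
Combining with the Gagliardo--Nirenberg inequality $\NLp{\psi}{4}^2\le C\,\NLtwo{\nabla\psi}^{3/2}\NLtwo{\psi}^{1/2}$ and Young's inequality absorbs $\NLp{\psi}{4}^2$ into $\NLtwo{\nabla\psi}^2$, while the already-established upper bound $\varepsilon_l[w]\le\varepsilon_l[u]+C^u_l\NLtwo{w-u}$ keeps the right-hand side bounded. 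This gives $\NLtwo{\nabla\psi}\le C_{l,u}$ uniformly for $\NLtwo{w-u}\le 1$, hence a uniform $L^4$-bound on $S^w_l$, and so $\varepsilon_l[u]-\varepsilon_l[w]\le C_{l,u}\NLtwo{w-u}$.

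Combining the two one-sided estimates produces the Lipschitz bound $\SN{\varepsilon_l[w]-\varepsilon_l[u]}\le C_{l,u}\NLtwo{w-u}$, which is strictly stronger than the stated continuity. The delicate point is the bootstrap yielding the uniform $H^1$-bound on the eigenfunctions of $\Ch_w$; everything else follows from routine min--max together with the subcritical 3D Sobolev embedding.
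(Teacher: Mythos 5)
Your proposal is correct and follows essentially the same route as the paper: the Courant--Fischer min--max characterization, a H\"older/Gagliardo--Nirenberg bound $\int(w-u)\psi^2\le\NLtwo{w-u}\|\psi\|_{L^4}^2\le C\NLtwo{w-u}\,\SNHone{\psi}^{3/2}\NLtwo{\psi}^{1/2}$, a fixed $u$-eigenspace as test space for one inequality, and a bootstrap on the $w$-eigenspace (using the already-proved upper bound on $\varepsilon_l[w]$ to uniformly control $\SNHone{\psi}$ there) for the other. The only cosmetic difference is that the paper phrases the first direction as a pure $\varlimsup$ argument while you record the Lipschitz rate explicitly, and the paper routes the interpolation through $L^6$ rather than $L^4$; the substance is identical.
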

\begin{proof}
Let $\lambda_1,\lambda_2,\cdots$ be the eigenvalues of $(-\Delta)|_{\zbHone}$.
Recall \cite[equation (2.1)]{kai97} (see also \cite{li83,ree78}) that the spectrum of $(-\Delta)|_{\zbHone}$ is discrete and has a countable number of eigenvalues
\begin{align}\label{asym-lam}
0<\lambda_1\le \lambda_2\le \cdots\le\lambda_l\le \cdots,\qquad
\lim_{l\to\infty}(l^{-2/3}\lambda_l) = C_\Omega>0.
\end{align}
For any $w\in\Ltwo$, using the Cauchy-Schwarz inequality and Poincar\'{e}'s inequality, we have
\begin{equation}\label{uv2}
\SN{(w,v^2)} \le \N{w}_{L^2(\Omega)}
    \N{v}_{L^6(\Omega)}^{3/2} \N{v}_{L^2(\Omega)}^{1/2}
\le \frac{1}{2}\SNHone{v}^2 + C \NLtwo{w}^4\NLtwo{v}^2.
\end{equation}

We endow $\varepsilon_l[u]$ an eigenfunction $\psi_l$ satisfying $\Ch_u\psi_l=\varepsilon_l[u]\psi_l$. If $\varepsilon_l[u]$ has multiplicity $q$, say,  $\varepsilon_{l}[u]=\cdots =\varepsilon_{l+q-1}[u]$ without loss of generality, we choose the eigenfunctions satisfying $(\psi_{l+i},\psi_{l+j})=\delta_{ij}$ for $0\le i,j\le q-1$. Define $W_l(\Ch_u)=\Span{\psi_1,\cdots,\psi_{l+q-1}}$. By \eqref{uv2} and
the minimum-maximum principle, we have
\begin{align}\label{ub-lam}
\lambda_l \le \max_{v\in W_l(\Ch_u)} \frac{\SNHone{v}^2}{\NLtwo{v}^2}
= \max_{v\in W_l(\Ch_u)} \frac{a(u;v,v)-(u+V_0,v^2)}{\NLtwo{v}^2}
\le 2\varepsilon_l[u]+C \NLtwo{u+V_0}^4.
\end{align}
Similarly, we have
\begin{align}\label{lb-lam}
\varepsilon_l[u]\le 1.5\lambda_l +C\NLtwo{u+V_0}^4.
\end{align}

Suppose $w\to u$ strongly in the $L^2$-norm. It is easy to see that
\begin{align*}
\SN{a(u;v,v)-a(w;v,v)} =\SN{(u-w, v^2)}
\le \NLtwo{u-w}\N{v}_{L^4(\Omega)}^2\to 0 .
\end{align*}
Since $W_l(\Ch_u)$ is finite-dimensional, the minimum-maximum principle shows
\begin{align*}
\varlimsup_{w\to u}\varepsilon_l[w]
    \le\lim_{w\to u} \max_{v\in W_l(\Ch_u)} \frac{a(w;v,v)}{\NLtwo{v}^2}
    =\max_{v\in W_l(\Ch_u)}\lim_{w\to u} \frac{a(w;v,v)}{\NLtwo{v}^2}
    =\varepsilon_l[u].
\end{align*}
Moreover, by Poincar\'{e}'s inequality and arguments similar to \eqref{ub-lam}, we have
\begin{align*}
\max_{v\in W_l(\Ch_w) } \frac{\N{v}_{L^4(\Omega)}^2}{\NLtwo{v}^2} \le\,&
\max_{v\in W_l(\Ch_w) } \frac{C\SNHone{v}^2}{\NLtwo{v}^2} \le
C\big(\varepsilon_l[w]+ \NLtwo{w+V_0}^4\big).
\end{align*}
The right-hand side is bounded by a constant $C_u$ depending only on $\varepsilon_l[u]$ and $\NLtwo{u+V_0}$.
Then the Cauchy-Schwarz inequality implies
\begin{align*}
\varepsilon_l[u]\le\,& \max_{v\in W_l(\Ch_w) } \frac{a(u;v,v)}{(v,v)}
= \max_{v\in W_l(\Ch_w) } \frac{a(w;v,v)+(u-w,v^2)}{(v,v)}
\le \varepsilon_l[w]+ C_u\NLtwo{u-w}.
\end{align*}
This implies $\varepsilon_l[u]\le \varliminf\limits_{w\to u}\varepsilon_l[w]$. Therefore, we conclude
$\varepsilon_l[u] = \lim\limits_{w\to u}\varepsilon_l[w]$.
\end{proof}

\subsection{An equivalent operator equation}
\label{sec:opr}

Now we rewrite the Schr\"{o}dinger-Poisson system into a nonlinear elliptic problem
\begin{equation}\label{prob-V}
-\Delta V - n[V] = -n_D\quad \hbox{in}\;\; \Omega,\qquad
V=0\quad \hbox{on}\;\;\partial \Omega.
\end{equation}
The weak formulation is to seek $V\in\zbHone$ such that
\begin{equation}\label{weak-V}
(\nabla V,\nabla v) - (n[V],v) = -(n_D,v)\qquad \forall\,v\in\zbHone.
\end{equation}
In \cite{kai97,nie93}, the authors prove that problem \eqref{weak-V} has a unique solution $V\in\zbHone$.
Since $\Omega$ is a bounded convex polyhedron,
by \cite[Theorem~7.2]{kai97} and  \cite[Corollary~7.1]{kai97}, the solution satisfies
\ben
V\in H^2(\Omega)\cap\zbHone,\qquad V\in L^\infty(\Omega).
\een
Moreover, by \cite[Proposition~6.1]{kai97}, the electron density $n$ provides a map from $\Ltwo$ to $C(\bar\Omega)$, namely, $n[v]\in C(\bar\Omega)$ for any $v\in \Ltwo$.

\subsection{Conditions 1 and 2 of Theorem~\ref{thm:compact}}
\label{sec:dA}

In order to apply the abstract theory in Theorem~\ref{thm:compact} to the Schr\"{o}dinger-Poisson model, we define $X=H_0^1(\Omega)$ and $Y=L^2(\Omega)$.
Clearly $X$ is compactly embedded into $Y$. To write the nonlinear equation \eqref{prob-V} into a fixed-point problem, we should first define an operator $\Ca:Y\to X$ as given in \eqref{eq:A}.

For $u\in L^2(\Omega)$, $\Ca[u]:=\xi$ is the solution to the Poisson equation
\begin{align}\label{Poisson-xi}
-\Delta \xi =n[u]-n_D\quad \hbox{in}\;\; \Omega,\qquad
\xi =0 \quad\hbox{on}\;\; \partial \Omega.
\end{align}
Clearly the fixed point of $\Ca$, satisfying $V=\Ca[V]$, is the solution to the nonlinear equation \eqref{prob-V}. By \cite[Theorem~6.3]{kai97}, the electron-density operator
$n:\Ltwo\to\Ltwo$  is Fr\'{e}chet-differentiable at any $u\in L^2(\Omega)$.
Its Fr\'{e}chet-derivative $n'[u]$ provides a linear and continuous operator from $\Ltwo$ to $\Ltwo$, that is, $n'[u]\in \Cl(L^2(\Omega))$. It follows that
\begin{align}\label{cont-A}
\lim_{\NLtwo{v-u}\to 0} \NHone{\Ca[v]-\Ca[u]}
\le \lim_{\NLtwo{v-u}\to 0} \NLtwo{n[v]-n[u]} =0.
\end{align}
Therefore, $\Ca$ is a continuous operator and satisfies Assumption~\ref{ass-1}.
The regularity theory of elliptic equations shows $\Ca[u]\in H^2(\Omega)\cap\zbHone$.

Next we show that $\Ca$ satisfies the conditions 1 and 2 of Theorem~\ref{thm:compact}.
For $w\in\Ltwo$, let $\eta$ be the unique solution to the elliptic problem
\begin{align}\label{Poisson-eta}
-\Delta\eta = n'[u](w)\quad \hbox{in}\;\; \Omega,\qquad
\eta=0\quad\hbox{on}\;\; \partial \Omega.
\end{align}
Clearly $\Ca'[u](w):=\eta$ defines an operator $\Ca'[u]:\Ltwo\to \zbHone$ which satisfies
\ben
\N{\Ca'[u]}_{\Cl(L^2(\Omega),\zbHone)}\le \N{n'[u]}_{\Cl(L^2(\Omega))}.
\een
As $\NLtwo{v}\to 0$, from \eqref{Poisson-xi} and \eqref{Poisson-eta}, we also have
\begin{align*}
\frac{\N{\Ca[v+u]-\Ca[u]-\Ca'[u](v)}_{H^1(\Omega)}}{\NLtwo{v}}
\le \frac{\NLtwo{n[v+u]-n[u]-n'[u](v)}}{\NLtwo{v}} \to 0.
\end{align*}
Therefore, $\Ca'[u]$ is the Fr\'{e}chet-derivative of $\Ca$ at $u$.
\vspace{1mm}

\begin{lemma} \label{lem:condition-2}
There exists a constant $C>0$ depending only on $\Omega$ such that
\ben
\N{(\Ci-\Ca'[u])^{-1}}_{\Cl(\zbHone)} \le C\qquad
\forall\, u\in\Ltwo.
\een
\end{lemma}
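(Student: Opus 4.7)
The plan is to recast the operator equation $(\Ci-\Ca'[u])v=f$, for given $f\in\zbHone$, as a coercive weak elliptic problem on $\zbHone$, to apply Lax-Milgram for solvability, and then to read off the uniform bound from a one-line energy estimate. Since $\Ca'[u]$ is the Dirichlet-Poisson solver applied to $n'[u]v$ (cf.~\eqref{Poisson-eta}), letting $\eta=\Ca'[u]v$ and rewriting $v-\eta=f$ as $-\Delta(v-f)=n'[u]v$ gives the equivalent variational formulation: find $v\in\zbHone$ such that
\begin{equation*}
b(v,w):=(\nabla v,\nabla w)-(n'[u]v,w)=(\nabla f,\nabla w)\qquad\forall\,w\in\zbHone.
\end{equation*}

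The crucial ingredient is the sign condition $(n'[u]v,v)\le 0$ for every $v\in\Ltwo$, which is the infinitesimal form of the strict monotonicity of the electron-density operator established in \cite{kai97,nie93}; this is the very property that underlies the well-posedness of the nonlinear Poisson equation \eqref{eq:Poisson}. Combined with $n'[u]\in\Cl(\Ltwo)$ and the continuous embedding $\zbHone\hookrightarrow\Ltwo$, the form $b$ is continuous on $\zbHone\times\zbHone$ and satisfies $b(v,v)\ge\SNHone{v}^2$. Poincar\'{e}'s inequality upgrades this to full $\zbHone$-coercivity with a constant depending only on $\Omega$, so Lax-Milgram provides a unique solution $v\in\zbHone$, which shows that $\Ci-\Ca'[u]$ is a bijection of $\zbHone$.

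For the norm bound, I would test the variational equation with $w=v$ and apply Cauchy-Schwarz:
\begin{equation*}
\SNHone{v}^2\le b(v,v)=(\nabla f,\nabla v)\le\SNHone{f}\,\SNHone{v},
\end{equation*}
whence $\SNHone{v}\le\SNHone{f}$, and one more application of Poincar\'{e} yields $\NHone{v}\le C\NHone{f}$ with $C=C(\Omega)$ independent of $u$, which is the claimed bound on $\N{(\Ci-\Ca'[u])^{-1}}_{\Cl(\zbHone)}$. The only nontrivial point I expect is pinning down the sign condition $(n'[u]v,v)\le 0$: it has to be read off from the strict monotonicity of $n$ (in the convention used in \cite{kai97,nie93}, which is precisely the one compatible with \eqref{eq:Poisson}) by writing $(n[u+tv]-n[u],v)\le 0$ and passing to the limit $t\downarrow 0$. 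Once this is settled, the remainder is standard Lax-Milgram machinery and nothing depends on $u$ beyond the continuity of $b$.
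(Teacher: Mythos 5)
Your proof is correct and follows essentially the same path as the paper: both derive the sign condition $(n'[u]v,v)\le 0$ from the monotonicity of $n$ cited in \cite{kai97}, establish coercivity $\big(\nabla(\Ci-\Ca'[u])w,\nabla w\big)\ge\SNHone{w}^2$, and close with Poincar\'{e}'s inequality. The only inessential difference is that you phrase solvability explicitly via Lax--Milgram, whereas the paper notes in passing the compactness of $\Ca'[u]$ before stating the same one-line energy estimate.
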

\begin{proof}
By \cite[Theorem~6.5]{kai97}, the electron-density operator $n$ is monotone, namely,
\begin{align*}
(n[u_2]-n[u_1],u_1-u_2)\geq 0\qquad \forall\, u_1,u_2\in L^2(\Omega).
\end{align*}
This implies $(n'[u]w,w) \le 0$ for all $w\in L^2(\Omega)$. The compactness of the injection $H^1_0(\Omega)\hookrightarrow\Ltwo $ shows that the operator $\Ca'[u]:H^1_0(\Omega)\to H^1_0(\Omega)$ is compact. Since
\begin{align*}
\big(\nabla (\Ci-\Ca'[u])w,\nabla w\big)=(\nabla w,\nabla w)-(n'[u]w,w)\ge \SNHone{w}^2,
\end{align*}
the operator $(\Ci-\Ca'[u])^{-1}:\zbHone\to\zbHone$ exists and satisfies
$\SNHone{(\Ci-\Ca'[u])^{-1}v} \le \SNHone{v}$ for any $v\in\zbHone$.
The proof is finished upon using Poincar\'{e}'s inequality.
\end{proof}

\section{Finite element approximation}
\label{sec:fem}

The purpose of this section is to propose a finite element method for solving the Schr\"{o}dinger-Poisson model. The series on the right-hand side of \eqref{eq:density} must be truncated in practical computations.
For any $M>0$, we introduce a cutoff function which satisfies
\begin{equation}\label{eq:chiM}
\chi_M\in C^\infty(\bbR),\qquad
\chi_M(t)=
\begin{cases}
1 &\hbox{if}\;\; t\leq M,\\
0 &\hbox{if}\;\; t>M+1,
\end{cases}
\quad\hbox{and}\quad \chi_M'\le 0\;\;\; \hbox{in}\;\; (M,M+1).
\end{equation}
The truncated distribution function is defined as $f_M=\chi_M f$.

\subsection{The discrete problem}

Let $\Ct_h$ be a quasi-uniform, shape-regular, and tetrahedral mesh of $\Omega$. The diameter of an element $K\in\Ct_h$ is denoted by $h_K$. The mesh size of $\Ct_h$ is defined by $h:=\max\limits_{K\in\Ct_h}h_K$. The linear continuous finite element space is defined by
\begin{align*}
X_h = \big\{v_h\in\zbHone: v_h|_K\in P_1(K),\;\forall\, K\in\Ct_h\big\},\qquad
N_h :=\dim (X_h),
\end{align*}
where $P_1(K)$ is the space of linear polynomials on $K$.

Given $u\in\Ltwo$, we define the bilinear form $a(u;\cdotbf,\cdotbf): \Hone\times\Hone\to \bbR$ by
\begin{equation}\label{eq:aform}
a(u;\psi,\varphi):= (\nabla \psi,\nabla \varphi) +((u+V_0)\psi, \varphi)
\qquad \forall\, \psi,\varphi\in\Hone.
\end{equation}
The eigenpair $(\varepsilon_l[u],\psi_l)\in\bbR\times\zbHone$ of the Hamiltonian $\Ch_u$ satisfies the weak formulation
\begin{equation}\label{eq:eigen-u}
a(u;\psi_{l},\varphi\big) =\varepsilon_{l}[u] \big(\psi_{l},\varphi\big)
\quad \forall\,\varphi\in \zbHone,\qquad \NLtwo{\psi_{l}} =1.
\end{equation}
The finite element approximation of \eqref{eq:eigen-u} is to seek
$(\varepsilon_{l,h}[u],\psi_{l,h})\in\bbR\times X_h$ such that
\begin{equation}\label{eq:eigenh}
a(u;\psi_{l,h},\varphi_h\big) =\varepsilon_{l,h}[u]\big(\psi_{l,h},\varphi_h\big)
\quad \forall\,\varphi_h\in X_h,\qquad \NLtwo{\psi_{l,h}} =1.
\end{equation}
The eigenvalues satisfy $\varepsilon_{1,h}[u]\le \varepsilon_{2,h}[u] \le \cdots\le \varepsilon_{N_h,h}[u]$.

The finite element approximation to problem \eqref{eq:Poisson} is to seek $V_h\in X_h$ such that
\begin{align}\label{eq:prob-Vh}
(\nabla V_h,\nabla v_h) = (n_h[V_h]-n_D, v_h)\qquad \forall\, v_h\in X_h.
\end{align}
For any $u\in\Ltwo$, $n_h[u]\in C(\bar\Omega)$ is defined by
\begin{equation}\label{eq:nh}
n_h[u] =\sum^{L_h[u]}_{l=1}f_{M_h}(\varepsilon_{l,h}[u]-\varepsilon_{F,h}[u])\psi_{l,h}^2,
\qquad M_h=2\mu^{-1}|\ln h|,
\end{equation}
where $L_h=L_h[u]$ is the smallest integer satisfying $f_{M_h}(\varepsilon_{L_h,h}[u]-\varepsilon_{F,h}[u]) =0$. The discrete Fermi level $\varepsilon_{F,h}[u]$ is determined through the conservation of electrons
\begin{equation}\label{eq:epsLh}
N_0 =\sum^{L_h}_{l=1}f_{M_h}(\varepsilon_{l,h}[u]-\varepsilon_{F,h}[u]).
\end{equation}
Without causing confusions, we enlarge the numbers of discrete eigenvalues and discrete eigenfunctions by defining $\varepsilon_{l,h}[u] :=\varepsilon_{N_h,h}[u]$ and $\psi_{l,h} :=\psi_{N_h,h}$ for all $l>N_h$. Then $f_{M_h}(\varepsilon_{l,h}[u]-\varepsilon_{F,h}[u])=0$ for all $l\ge L_h$ and
\begin{equation*}
n_h[u] =\sum^{\infty}_{l=1}f_{M_h}(\varepsilon_{l,h}[u]-\varepsilon_{F,h}[u])\psi_{l,h}^2,
\qquad
N_0 =\sum^{\infty}_{l=1}f_{M_h}(\varepsilon_{l,h}[u]-\varepsilon_{F,h}[u]).
\end{equation*}

\begin{lemma}\label{lem:cont-epslh}
Suppose $\varepsilon_{l,h}[u]$ and $\varepsilon_{l,h}[w]$ are the eigenvalues of \eqref{eq:eigenh} with potentials $u$ and $w$, respectively. Then for any $1\le l\le L_h[u]$,
\begin{align}\label{cont-eigen-h}
\lim\limits_{\NLtwo{w-u}\to 0}\SN{\varepsilon_{l,h}[w]-\varepsilon_{l,h}[u]}=0.
\end{align}
\end{lemma}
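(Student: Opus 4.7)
The plan is to mirror the variational argument used for the continuous Lemma~\ref{lem:cont-eigen}, now applying the minimum--maximum principle within the fixed finite-dimensional space $X_h$. For any $1\le l\le N_h$, the Courant--Fischer characterization of \eqref{eq:eigenh} reads
\[
\varepsilon_{l,h}[u] = \min_{\substack{S\subset X_h\\ \dim S=l}}\ \max_{0\ne v_h\in S}\frac{a(u;v_h,v_h)}{\NLtwo{v_h}^2},
\]
so it suffices to control the Rayleigh-quotient perturbation $a(u;v_h,v_h)-a(w;v_h,v_h) = (u-w,v_h^2)$ uniformly on appropriate finite-dimensional subspaces of $X_h$.

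By Cauchy--Schwarz together with the Sobolev embedding $\zbHone\hookrightarrow L^4(\Omega)$ (the same ingredient that produced \eqref{uv2}), one has
\[
\bigl|(u-w,v_h^2)\bigr| \le \NLtwo{u-w}\,\N{v_h}_{L^4(\Omega)}^2.
\]
Let $W_{l,h}(\Ch_u)\subset X_h$ denote the span of the first $l+q-1$ discrete eigenfunctions of $\Ch_u$, where $q$ is the multiplicity of $\varepsilon_{l,h}[u]$. Testing \eqref{eq:eigenh} against its own eigenfunctions and handling the $(u+V_0, v_h^2)$ term exactly as in \eqref{uv2} yields $\SNHone{v_h}^2 \le C_u\NLtwo{v_h}^2$ for every $v_h\in W_{l,h}(\Ch_u)$, with $C_u$ depending only on $\varepsilon_{l,h}[u]$ and $\NLtwo{u+V_0}$; Sobolev embedding then gives a uniform bound $\N{v_h}_{L^4(\Omega)}^2\le C_u'\NLtwo{v_h}^2$ on $W_{l,h}(\Ch_u)$.

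Applying the min-max principle with the test space $W_{l,h}(\Ch_u)$ to $\varepsilon_{l,h}[w]$ yields
\[
\varepsilon_{l,h}[w] \le \max_{v_h\in W_{l,h}(\Ch_u)}\frac{a(w;v_h,v_h)}{\NLtwo{v_h}^2}\le \varepsilon_{l,h}[u]+C_u'\NLtwo{u-w},
\]
so $\varlimsup_{w\to u}\varepsilon_{l,h}[w]\le \varepsilon_{l,h}[u]$. The reverse inequality is obtained by swapping the roles of $u$ and $w$. The main obstacle is to ensure that the analogous Rayleigh bound over $W_{l,h}(\Ch_w)$ does not degenerate as $w$ varies: this is handled exactly as in the continuous case, since the forward inequality already gives $\varepsilon_{l,h}[w]\le \varepsilon_{l,h}[u]+1$ for $\NLtwo{u-w}$ small, which in turn uniformly controls the constant $C_w'$ in the perturbation estimate. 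Letting $\NLtwo{u-w}\to 0$ in both inequalities produces \eqref{cont-eigen-h} for all $1\le l\le L_h[u]$.
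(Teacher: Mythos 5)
Your proposal correctly fills in the details that the paper itself deliberately omits: the paper's proof of Lemma~\ref{lem:cont-epslh} consists of a single sentence noting that, since $X_h$ is finite dimensional, the argument is the same as (and easier than) that of Lemma~\ref{lem:cont-eigen}. You have reproduced exactly that argument — the Courant--Fischer characterization on $X_h$, the Cauchy--Schwarz/Sobolev bound on $(u-w,v_h^2)$ over the finite-dimensional test spaces $W_{l,h}$, the forward inequality followed by a role swap with the non-degeneracy of $C_w'$ controlled via the forward bound — so the approach coincides with the paper's intended proof.
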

\begin{proof}
Since $X_h$ is a finite-dimensional space, the proof is similar to the proof of Lemma~\ref{lem:cont-eigen}, and is easier than the latter.
We omit the details here.
\end{proof}

\subsection{Error estimates for approximate eigenvalues and eigenfunctions}

Throughout this section, for a given potential $u\in\Ltwo$, we abbreviate the notations and write
\ben
\varepsilon_{l}=\varepsilon_{l}[u],\quad \varepsilon_{F}=\varepsilon_{F}[u],\quad
\varepsilon_{l,h}=\varepsilon_{l,h}[u],\quad \varepsilon_{F,h}=\varepsilon_{F,h}[u],\quad
L_h=L_h[u].
\een
In view of \eqref{eq:density}--\eqref{eq:epsF} and \eqref{eq:nh}--\eqref{eq:epsLh}, we should estimate the errors
$\varepsilon_{l}  -\varepsilon_{l,h}$, $\psi_{l}-\psi_{l,h}$, and $\varepsilon_{F} -\varepsilon_{F,h}$.
It suffices to study the linear Schr\"{o}dinger equation \eqref{eq:eigen-u} and its finite element approximation \eqref{eq:eigenh}.
Let $\varepsilon_l$ and $\varepsilon_{l,h}$ be the eigenvalues of \eqref{eq:eigen-u} and \eqref{eq:eigenh} by counting multiplicity, respectively. Suppose $\psi_l$ is an eigenfunction of $\Ch_u$ associated with $\varepsilon_l$.

\begin{lemma}\label{lem:eigenf}
There exists a constant $C>0$ depending only on $\Omega$ such that
\begin{align}
&\SNHone{\psi_l}^2 \le 2|\varepsilon_l|+ C \NLtwo{u+V_0}^2,
    \label{psi-H1-ub}\\
&\SNHone{\psi_l}^2 \ge \max\big(0.5|\varepsilon_l|-C\NLtwo{u+V_0}^2,C\big),
    \label{psi-H1-lb}\\
&\N{\psi_l}_{H^2(\Omega)} \le C\big(|\varepsilon_l|+
\NLtwo{u+V_0}^2\NHone{\psi_l}\big).  \label{psi-H2}
\end{align}
\end{lemma}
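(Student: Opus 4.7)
All three bounds follow from testing \eqref{eq:eigen-u} with $\varphi=\psi_l$ and combining the resulting identity with Sobolev-type estimates. The plan is to start from
\begin{equation*}
\SNHone{\psi_l}^2 + (u+V_0,\psi_l^2) = \varepsilon_l,
\end{equation*}
obtained from the weak form after using $\NLtwo{\psi_l}=1$. For \eqref{psi-H1-ub} I would control $|(u+V_0,\psi_l^2)|$ by exactly the Cauchy--Schwarz plus Gagliardo--Nirenberg chain already used in \eqref{uv2}: $|(u+V_0,\psi_l^2)|\le \NLtwo{u+V_0}\N{\psi_l}_{L^4(\Omega)}^2\le C\NLtwo{u+V_0}\SNHone{\psi_l}^{3/2}$ (using $\NLtwo{\psi_l}=1$ and $\zbHone\hookrightarrow L^6(\Omega)$ in 3D), followed by Young's inequality to get $|(u+V_0,\psi_l^2)|\le \tfrac12\SNHone{\psi_l}^2+C\NLtwo{u+V_0}^4$. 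Substituting into the identity and using $\varepsilon_l\le|\varepsilon_l|$ yields the upper bound.

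For the lower bound \eqref{psi-H1-lb} the uniform piece $\SNHone{\psi_l}^2\ge C$ is immediate from Poincar\'e's inequality on $\zbHone$ together with $\NLtwo{\psi_l}=1$. For the $\varepsilon_l$-dependent piece I would split on the sign of $\varepsilon_l$. When $\varepsilon_l\ge 0$, rearranging the identity and reusing the estimate on $|(u+V_0,\psi_l^2)|$ gives $\tfrac32\SNHone{\psi_l}^2\ge \varepsilon_l-C\NLtwo{u+V_0}^4$, yielding the claimed bound. When $\varepsilon_l<0$, the identity together with $\SNHone{\psi_l}^2\ge 0$ forces $|\varepsilon_l|\le |(u+V_0,\psi_l^2)|\le \tfrac12\SNHone{\psi_l}^2+C\NLtwo{u+V_0}^4$; combining this with the upper bound \eqref{psi-H1-ub} already proved gives $|\varepsilon_l|\le C\NLtwo{u+V_0}^4$, so that the proposed lower bound is nonpositive and holds trivially.

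For the $H^2$ estimate \eqref{psi-H2}, the convexity of $\Omega$ provides the $H^2$-regularity $\N{v}_{H^2(\Omega)}\le C\NLtwo{\Delta v}$ for $v\in H^2(\Omega)\cap\zbHone$. Applied to $\psi_l$ via the PDE $-\Delta\psi_l=\varepsilon_l\psi_l-(u+V_0)\psi_l$ and $\NLtwo{\psi_l}=1$, this reduces the task to bounding $\NLtwo{(u+V_0)\psi_l}$. I would use Cauchy--Schwarz $\NLtwo{(u+V_0)\psi_l}\le \NLtwo{u+V_0}\N{\psi_l}_{L^\infty(\Omega)}$ together with the three-dimensional Gagliardo--Nirenberg interpolation
\begin{equation*}
\N{\psi_l}_{L^\infty(\Omega)}\le C\N{\psi_l}_{H^2(\Omega)}^{1/2}\N{\psi_l}_{L^6(\Omega)}^{1/2}\le C\N{\psi_l}_{H^2(\Omega)}^{1/2}\NHone{\psi_l}^{1/2},
\end{equation*}
where the last step uses $\zbHone\hookrightarrow L^6(\Omega)$. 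One Young inequality with exponents $2,2$ then absorbs $\tfrac12\N{\psi_l}_{H^2(\Omega)}$ into the left side and leaves exactly $C\NLtwo{u+V_0}^2\NHone{\psi_l}$ on the right. I expect \eqref{psi-H2} to be the main obstacle: because the embedding $H^{3/2}\not\hookrightarrow L^\infty$ is critical in three dimensions, the naive chain $\NLtwo{(u+V_0)\psi_l}\le \NLtwo{u+V_0}\N{\psi_l}_{H^2(\Omega)}$ is too lossy, and the sharp Gagliardo--Nirenberg interpolation above (together with a careful Young's absorption that isolates precisely $\N{\psi_l}_{H^2(\Omega)}$) is what delivers the product structure $\NLtwo{u+V_0}^2\NHone{\psi_l}$ claimed in the lemma.
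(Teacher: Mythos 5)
Your overall route matches the paper's: test \eqref{eq:eigen-u} with $\varphi=\psi_l$ to obtain $\SNHone{\psi_l}^2+(u+V_0,\psi_l^2)=\varepsilon_l$, control $|(u+V_0,\psi_l^2)|$ by Cauchy--Schwarz plus the $L^4$/Gagliardo--Nirenberg chain and one Young absorption, and for \eqref{psi-H2} combine the convex-domain $H^2$-regularity of $-\Delta\psi_l=\varepsilon_l\psi_l-(u+V_0)\psi_l$ with the interpolation $\NLinf{\psi_l}\le C\N{\psi_l}_{H^2(\Omega)}^{1/2}\NHone{\psi_l}^{1/2}$ and a second Young absorption. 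These are exactly the paper's three ingredients.

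The one defective step is your treatment of the $\varepsilon_l<0$ subcase of \eqref{psi-H1-lb}. From $|\varepsilon_l|\le\tfrac12\SNHone{\psi_l}^2+C\NLtwo{u+V_0}^4$ together with the already-proved upper bound $\SNHone{\psi_l}^2\le 2|\varepsilon_l|+C\NLtwo{u+V_0}^4$, substitution yields only $|\varepsilon_l|\le|\varepsilon_l|+C'\NLtwo{u+V_0}^4$, which is vacuous and does \emph{not} give $|\varepsilon_l|\le C\NLtwo{u+V_0}^4$. The loss occurred when you discarded the $-\SNHone{\psi_l}^2$ term in passing from $|\varepsilon_l|=-\SNHone{\psi_l}^2-(u+V_0,\psi_l^2)$ to $|\varepsilon_l|\le|(u+V_0,\psi_l^2)|$; keeping it gives $|\varepsilon_l|\le -\tfrac12\SNHone{\psi_l}^2+C\NLtwo{u+V_0}^4\le C\NLtwo{u+V_0}^4$ directly, with no appeal to the upper bound. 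In fact the sign split is unnecessary altogether: regardless of the sign of $\varepsilon_l$, the identity gives $|\varepsilon_l|\le\SNHone{\psi_l}^2+|(u+V_0,\psi_l^2)|\le\tfrac32\SNHone{\psi_l}^2+C\NLtwo{u+V_0}^4$, hence $\SNHone{\psi_l}^2\ge\tfrac23|\varepsilon_l|-C\NLtwo{u+V_0}^4\ge\tfrac12|\varepsilon_l|-C\NLtwo{u+V_0}^4$, which is what the paper intends by ``similarly.'' (Incidentally, both your derivation and the paper's proof produce $\NLtwo{u+V_0}^4$ where the lemma statement writes $\NLtwo{u+V_0}^2$; that exponent mismatch is in the source, not a defect of your work.)
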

\begin{proof}
Using \eqref{eq:eigen-u} and the Schwarz inequality, we have
\begin{align*}
\SNHone{\psi_l}^2 \le C \NLtwo{u+V_0}\N{\psi_l}^{3/2}_{L^6(\Omega)}+\SN{\varepsilon_l}
\le C \NLtwo{u+V_0}\SNHone{\psi_l}^{3/2} +\SN{\varepsilon_l}  .
\end{align*}
Similarly, the lower bound estimate is given by
\begin{align*}
\SNHone{\psi_l}^2 \ge 0.5|\varepsilon_l| - C \NLtwo{u+V_0}^4.
\end{align*}
Moreover, Poincar\'{e}'s inequality implies $\SNHone{\psi_l}\ge C\NLtwo{\psi_l} = C$.
So \eqref{psi-H1-lb} holds.

Finally, using the $H^2$-regularity of eigenfunctions, we have
\ben
\N{\psi_l}_{H^2(\Omega)} \le C(\NLtwo{u+V_0}\NLinf{\psi_l} +\SN{\varepsilon_l}).
\een
Then \eqref{psi-H2} follows from the interpolation inequality $\N{\psi_l}_{L^\infty(\Omega)} \le C\N{\psi_l}_{H^2(\Omega)}^{1/2} \NHone{\psi_l}^{1/2}$.
\end{proof}

\begin{lemma}\label{lem:err-eigen}
Suppose $u\in\Cb_r$ and $h$ is small enough. For each discrete eigenfunction $\psi_{l,h}$, there exist an eigenfunction $\psi_l$ associated with $\varepsilon_l$ and a constant $C_r$ depending only on $r,\Omega$ such that
\begin{align*}
\NHone{\psi_l-\psi_{l,h}} \le C_r |\varepsilon_l|h , \qquad
|\varepsilon_l-\varepsilon_{l,h}|\le C_r \varepsilon_l^2 h^2.
\end{align*}
\end{lemma}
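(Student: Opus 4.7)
The plan is to reduce the symmetric eigenvalue problem \eqref{eq:eigen-u}--\eqref{eq:eigenh} to a coercive setting and then apply the classical Babuška--Osborn convergence theory. First I would shift the Hamiltonian by a constant $K=K(r,\Omega)>0$ so that $a_K(u;v,w):=a(u;v,w)+K(v,w)$ is coercive on $\zbHone$ uniformly for all $u\in\Cb_r$; coercivity is obtained by bounding $((u+V_0)v,v)$ via an inequality of the type \eqref{uv2} together with the uniform estimate $\NLtwo{u+V_0}\le r+\NLinf{V_0}$. The shifted eigenpairs $(\varepsilon_l+K,\psi_l)$ and $(\varepsilon_{l,h}+K,\psi_{l,h})$ solve positive-definite self-adjoint problems with the same eigenfunctions and the same absolute eigenvalue errors as the original problem.

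Next I would invoke the standard Babuška--Osborn theorem applied to the shifted problem. For each discrete eigenfunction $\psi_{l,h}$ attached to $\varepsilon_{l,h}$, it delivers an eigenfunction $\psi_l\in M(\varepsilon_l)$ (picked from the eigenspace to handle multiplicities) satisfying
\begin{align*}
\NHone{\psi_l-\psi_{l,h}} \le C_r\,\eta_h(\varepsilon_l),
\qquad
|\varepsilon_l-\varepsilon_{l,h}| \le C_r\,\eta_h(\varepsilon_l)^2,
\end{align*}
where
\begin{align*}
\eta_h(\varepsilon_l) := \sup_{\psi\in M(\varepsilon_l),\,\NLtwo{\psi}=1} \NHone{\psi-P_{u,h}\psi}
\end{align*}
and $P_{u,h}$ is the Ritz projection associated with $a_K(u;\cdot,\cdot)$. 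I would then combine the standard $H^1$-interpolation estimate $\NHone{\psi-P_{u,h}\psi}\le C_r h\N{\psi}_{H^2(\Omega)}$ (which relies on the $H^2$-regularity available because $\Omega$ is a convex polyhedron) with the bound $\N{\psi_l}_{H^2(\Omega)}\le C_r|\varepsilon_l|$ coming from \eqref{psi-H2} of Lemma~\ref{lem:eigenf}, after noting that the eigenvalue term dominates the correction $\NLtwo{u+V_0}^2\NHone{\psi_l}$ once $|\varepsilon_l|\gtrsim 1$ by means of \eqref{psi-H1-ub}. The two advertised bounds follow.

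The main obstacle is keeping every constant uniform over the ball $\Cb_r$ while retaining the sharp powers of $|\varepsilon_l|$. This requires a careful accounting of how $\NLtwo{u+V_0}$ enters (i) the coercivity constant of $a_K$, (ii) the $H^2$-regularity constant for the shifted Poisson-type problem $-\Delta w+(u+V_0+K)w=f$, and (iii) the Ritz-projection interpolation bound. A secondary subtlety is that, because eigenvalues of $\Ch_u$ may be degenerate, one cannot pair off individual continuous and discrete eigenfunctions a priori: the Babuška--Osborn framework controls only the gap between the full eigenspace $M(\varepsilon_l)$ and the span of the relevant cluster of discrete eigenvectors, from which the desired $\psi_l$ is then extracted. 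Finally, one must verify that $h$ is small enough (depending on $r$ and on the spectral separation of $\varepsilon_l$ from the rest of the spectrum) for the perturbation argument to lock onto the intended cluster of discrete eigenvalues.
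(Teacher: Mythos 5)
Your proposal takes essentially the same route as the paper: add a constant (depending on $r,\Omega$) to the potential so that the bilinear form becomes coercive on $\zbHone$ with constants uniform in $u\in\Cb_r$, apply the Babu\v{s}ka--Osborn eigenvalue/eigenfunction error bounds for the shifted symmetric positive-definite problem, and then close the argument by combining the $H^1$ Ritz-projection estimate with the $H^2$ bound $\N{\psi_l}_{H^2(\Omega)}\le C_r(1+|\varepsilon_l|)$ from Lemma~\ref{lem:eigenf}. The paper implements the shift by replacing $u$ with $\phi=u+C_0^4\NLtwo{u+V_0}^4$ and working in the equivalent weighted norm $\N{\cdot}_u$, but this is just a concrete realization of the same idea; after the shift, both arguments land on identical estimates and the same final powers of $|\varepsilon_l|$ and $h$.
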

\begin{proof}
By the interpolation inequality and Poincar\'{e}'s inequality, we have
\begin{align}\label{vL4}
\N{v}_{L^4(\Omega)}^2\le C_0\SN{v}_{H^1(\Omega)}^{3/2}\N{v}_{L^2(\Omega)}^{1/2}\qquad
\forall\,v\in\zbHone.
\end{align}
Setting $\phi=u+C_0^4\NLtwo{u+V_0}^4$, we can rewrite
\eqref{eq:eigen-u} and \eqref{eq:eigenh} as follows
\begin{align}
& a(\phi;\psi_l,\varphi)=\hat{\varepsilon}_l\big(\psi_l,\varphi\big)
\qquad \forall\,\varphi\in\zbHone,     \label{lam-v}\\
& a(\phi;\psi_{l,h}, \varphi_h) =\hat{\varepsilon}_{l,h}(\psi_{l,h},\varphi_h)
\qquad \forall\,\varphi_h\in X_h,\label{lam-vh}
\end{align}
where $\hat{\varepsilon}_l=\varepsilon_l+C_0^4\NLtwo{u+V_0}^4$ and $\hat{\varepsilon}_{l,h}=\varepsilon_{l,h}+C_0^4\NLtwo{u+V_0}^4$.

Now we define a weighted norm
\ben
\N{v}_u:= \big(\SNHone{v}^2 + C_0^4\NLtwo{u+V_0}^4\NLtwo{v}^2\big)^{1/2}
\qquad \forall\,v\in\Hone.
\een
An application of Young's inequality shows that
\begin{align*}
a(\phi;v,v) \ge\,& \N{v}_u^2 -\N{u+V_0}_{L^2(\Omega)}\NHone{v}^{3/2}\NLtwo{v}^{1/2}
\ge \frac{1}{4} \N{v}_u^2,\\
a(\phi;v,w) \le\,& C\N{v}_u\N{w}_u.
\end{align*}
Therefore, $a(\phi;\cdotbf,\cdotbf)$ provides a coercive and continuous bilinear form on $\Hone$.

Suppose $l\ge 1$ is fixed and $h$ is small enough. By \cite[(8.47b)--(8.47c)]{bab91},
there exists a constant $C>0$ independent of $h$ and $\varepsilon_l$ such that
\begin{align*}
&\NHone{\psi_l-\psi_{l,h}} \le C \inf_{v_h\in X_h}\N{\psi_l-v_h}_u
\le C \big(h+h^2\NLtwo{u+V_0}^2\big) \SN{\psi_l}_{H^2(\Omega)}, \\
&|\hat{\varepsilon}_l-\hat{\varepsilon}_{l,h}| \le
C \hat{\varepsilon}_l\NHone{\psi_l}^{-2}
\inf_{v_h\in X_h}\N{\psi_l-v_h}_u^2
\le C \big(h^2 +h^4\NLtwo{u+V_0}^4\big)
\hat{\varepsilon}_l \SN{\psi_l}^2_{H^2(\Omega)}\NHone{\psi_l}^{-2}.
\end{align*}
The proof is finished by using Lemma~\ref{lem:eigenf}.
\end{proof}

\subsection{Error estimate for the approximate Fermi level}

Next we estimate the error $\varepsilon_F-\varepsilon_{F,h}$.
The lemma below states that the truncation number $L_h$ grows in the rate $\SN{\ln h}^{3/2}$ as $h\to 0$.
\vspace{1mm}

\begin{lemma}\label{lem:Lh}
Suppose $u\in \Cb_r$ with $r>0$ and $h$ is small enough. There exist a constant $C_r>0$ depending only on $r,\Omega$ and two positive constants $C_0$ and $C_1$ depending only on $\Omega$ such that
\ben
C_0\SN{\ln h} -C_r\le L_h^{2/3}\le C_1\SN{\ln h} +C_r.
\een
\end{lemma}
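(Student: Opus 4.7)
The idea is to relate $L_h$ to the growth of the discrete eigenvalues $\varepsilon_{l,h}[u]$ and to show that, within a suitable range of $l$, they behave like $l^{2/3}$, so the threshold $M_h = 2\mu^{-1}|\ln h|$ forces $L_h^{2/3}$ to be of order $|\ln h|$. The argument combines Weyl's asymptotic \eqref{asym-lam}, the continuous bracket \eqref{ub-lam}--\eqref{lb-lam}, and an interpolation argument on $X_h$.

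First I would establish the uniform bound $|\varepsilon_{F,h}[u]| \le C_r$ from the conservation identity \eqref{eq:epsLh}. In both cases of \eqref{eq:f}, $f$ is strictly positive and strictly decreasing, and a min--max argument together with \eqref{ub-lam}--\eqref{lb-lam} at $l=1$ gives $|\varepsilon_{1,h}[u]| \le C_r$. A too-negative $\varepsilon_{F,h}$ would drive every summand in \eqref{eq:epsLh} so small that the sum cannot reach $N_0$ (using the $l^{2/3}$-growth of $\varepsilon_{l,h}$ established next to bound the tail), while a too-positive $\varepsilon_{F,h}$ would make the leading terms alone exceed $N_0$; together these force $\varepsilon_{F,h}[u]$ into an interval depending only on $r,\Omega,N_0$.

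Next I would prove the two-sided estimate
\[
c_0\, l^{2/3} - C_r \;\le\; \varepsilon_{l,h}[u] \;\le\; C_1\, l^{2/3} + C_r \qquad \text{whenever}\ h^2 \lambda_l \le 1/2.
\]
The lower bound follows from $\varepsilon_{l,h}[u] \ge \varepsilon_l[u]$ (min--max since $X_h \subset \zbHone$), \eqref{ub-lam}, and $\lambda_l \ge C_\Omega l^{2/3} - C$ from \eqref{asym-lam}. For the upper bound, take $V_l = \Span{\phi_1,\ldots,\phi_l}$ where $\phi_j$ are the Dirichlet Laplace eigenfunctions on $\Omega$, and use $W_l := I_h V_l \subset X_h$ as trial space in the min--max principle, with $I_h$ the Lagrange nodal interpolant. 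The $H^2$-regularity $\N{v}_{H^2(\Omega)} \le C \lambda_l \NLtwo{v}$ for $v \in V_l$ (from convex polyhedral elliptic regularity together with $(-\Delta)^2$ spectral orthogonality) combined with standard interpolation estimates gives
\[
\NLtwo{v - I_h v} \le C h^2 \lambda_l \NLtwo{v},\qquad \SNHone{I_h v}^2 \le 2\SNHone{v}^2 + C h^2 \lambda_l^2 \NLtwo{v}^2 \qquad \forall\, v \in V_l.
\]
Under $h^2 \lambda_l \le 1/2$, $I_h|_{V_l}$ is injective (hence $\dim W_l = l$) and the Rayleigh quotient of $a(u;\cdot,\cdot)$ on $W_l$ is bounded by $C \lambda_l + C_r$ (using \eqref{uv2} to absorb the $(u+V_0,(I_h v)^2)$ term). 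Together with $\lambda_l \le C l^{2/3} + C$, this yields the upper bound.

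The bounds on $L_h$ then follow from the structure of $\chi_{M_h}$ in \eqref{eq:chiM}: minimality of $L_h$ together with $\chi_{M_h}(t) > 0$ for $t < M_h + 1$ and $\chi_{M_h}(t) = 0$ for $t \ge M_h + 1$ forces
\[
\varepsilon_{L_h - 1, h}[u] - \varepsilon_{F,h}[u] \;<\; M_h + 1 \;\le\; \varepsilon_{L_h, h}[u] - \varepsilon_{F,h}[u].
\]
Inserting the uniform bound on $\varepsilon_{F,h}$ and the lower bound on $\varepsilon_{l,h}$ into the first inequality gives $(L_h - 1)^{2/3} \le C_1 |\ln h| + C_r$, which is the upper half of the lemma. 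Feeding this back, $\lambda_{L_h} \le C L_h^{2/3} \le C|\ln h|$, so $h^2 \lambda_{L_h} \le 1/2$ for $h$ small, and the upper bound on $\varepsilon_{l,h}$ is valid at $l = L_h$; substitution into the second inequality then yields $L_h^{2/3} \ge C_0 |\ln h| - C_r$. The main obstacle is the upper bound on $\varepsilon_{l,h}$: the trivial min--max on $X_h$ gives only $\varepsilon_{l,h} \ge \varepsilon_l$, and the interpolation construction requires $h^2 \lambda_l$ to be small. The self-consistent loop closes precisely because the upper bound on $L_h$ produced by the lower-bound estimate on $\varepsilon_{l,h}$ is $O(|\ln h|^{3/2})$, safely inside the range where interpolation applies.
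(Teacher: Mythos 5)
Your proof is correct and follows the same overall strategy as the paper: extract $L_h$ from the threshold inequalities $\varepsilon_{L_h-1,h}-\varepsilon_{F,h} < M_h+1 \le \varepsilon_{L_h,h}-\varepsilon_{F,h}$, bound the discrete Fermi level uniformly in $h$, and convert bounds on $\varepsilon_{L_h,h}$ into bounds on $L_h^{2/3}$ via Weyl's asymptotics \eqref{asym-lam} together with the bracket \eqref{ub-lam}--\eqref{lb-lam}. The place you genuinely diverge is the upper bound on $\varepsilon_{l,h}[u]$: you build a standalone two-sided estimate $c_0 l^{2/3}-C_r \le \varepsilon_{l,h}[u] \le C_1 l^{2/3}+C_r$ (valid once $h^2\lambda_l$ is small) by nodally interpolating the Laplace eigenspace $V_l$ into $X_h$ and testing the discrete min--max on $I_h V_l$, whereas the paper reuses its already-established eigenvalue error estimate $|\varepsilon_l-\varepsilon_{l,h}|\le C_r\varepsilon_l^2 h^2$ from Lemma~\ref{lem:err-eigen} (which cites \cite{bab91}) to lift the continuous bound to the discrete level. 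Both rest on the same interpolation machinery, so neither is sharper; yours is more self-contained and makes the self-consistency loop fully explicit --- first prove the upper bound on $L_h$, then deduce $h^2\lambda_{L_h}\to 0$ so the discrete upper bound is available at $l=L_h$, exactly the step the paper handles more tersely by observing ``Since $L_h^{2/3}\le C|\ln h|+C_r$.'' One smaller difference: for the uniform bound $|\varepsilon_{F,h}|\le C_r$, the paper is cleaner than your squeeze argument, writing $\varepsilon_F\le\varepsilon_{F,h}\le\varepsilon_{1,h}-f^{-1}(N_0)$ and controlling $\varepsilon_{1,h}$ by testing the discrete min--max with the $L^2$-projection of the first Laplace eigenfunction; your version works but the ``too-negative'' half quietly needs the lower bound $\varepsilon_{l,h}\ge cl^{2/3}-C_r$ valid for \emph{all} $l$, which is fine since that side comes for free from $\varepsilon_{l,h}\ge\varepsilon_l$ and requires no restriction on $h^2\lambda_l$.
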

\begin{proof}
Notice that $\varepsilon_{L_h-1,h} \le M_h+1+\varepsilon_{F,h}\le \varepsilon_{L_h,h}$.
By \eqref{asym-lam} and \eqref{ub-lam}, we have
\begin{align}\label{Lh2/3-ub}
L_h^{2/3}\le C_0 \lambda_{L_h-1}\le C_r+ C\varepsilon_{L_h-1}
    \le C_r+ C\varepsilon_{L_h-1,h}
\le C |\ln h|+  C_r\varepsilon_{F,h}.
\end{align}
Since $f$ is strictly decreasing, it has an inverse function $f^{-1}$.
Then $f(\varepsilon_{1,h}-\varepsilon_{F,h})\le N_0$ implies
\ben
\varepsilon_{F}\le \varepsilon_{F,h}\le \varepsilon_{1,h}-f^{-1}(N_0).
\een
Let $\phi_1$ be the eigenfunction associated with the first eigenvalue $\lambda_1$ of $(-\Delta)|_{\zbHone}$ and satisfying $\NLtwo{\phi_1}=1$. Let $\phi_{h}$ be the $L^2$ projection of $\phi_1$ onto $X_h$. From \cite{bra01}, we have
\ben
\NLtwo{\phi_1-\phi_{h}}\le Ch\NHone{\phi_1},\qquad
\NHone{\phi_h}\le C\NHone{\phi_1}.
\een
As $h\to 0$, these indicate
\begin{align*}
\varepsilon_{1,h} = \min_{v_h\in X_h}\frac{a(u;v_h,v_h)}{\NLtwo{v_h}^2}
\le \frac{a(u;\phi_h,\phi_h)}{\NLtwo{\phi_h}^2}
\le C \frac{(1+\NLtwo{u})\NHone{\phi_1}^2}{\NLtwo{\phi_1}^2-h^2\NHone{\phi_1}^2}
\le C (1+\NLtwo{u}).
\end{align*}
We conclude $|\varepsilon_{F,h}|\le C_r$. From \eqref{Lh2/3-ub}, we have
$L_h^{2/3}\le C |\ln h| +C_r$.

Similarly, by \eqref{asym-lam}, \eqref{ub-lam}, and Lemma~\ref{lem:err-eigen}, we find that
\begin{align*}
L_h^{2/3} \ge C\varepsilon_{L_h} -C_r
\ge C\varepsilon_{L_h,h}-C_r h^2 \varepsilon_{L_h}^2-C_r
\ge C|\ln h| -C_r h^2L_h^{4/3} -C_r.
\end{align*}
Since $L_h^{2/3}\le C |\ln h|+C_r$, we obtain $L_h^{2/3} \ge C_0|\ln h| -C_r$ when $h$ is small enough.
\end{proof}
\vspace{1mm}

\begin{lemma}\label{lem:fermi-h}
There exists an $h_0>0$ depending only on $r$ such that, for any $0<h\le h_0$,
the mapping $\varepsilon_{F,h}: \Cb_r\to\bbR$ is continuous and injective.
\end{lemma}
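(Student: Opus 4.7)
The plan is to recast \eqref{eq:epsLh} as a scalar implicit equation and analyze the regularity of its solution. Define
\[
\Phi(u,t) := \sum_{l=1}^{\infty} f_{M_h}\bigl(\varepsilon_{l,h}[u]-t\bigr) - N_0,
\qquad (u,t)\in\Cb_r\times\bbR,
\]
so that $\varepsilon_{F,h}[u]$ is characterized as a zero of $\Phi(u,\cdot)$. The whole proof then reduces to verifying (i) that $\Phi$ has enough regularity to apply the implicit function theorem in $t$, and (ii) that the resulting scalar solution map is injective.

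For continuity, I would first exploit the effective finiteness of the sum. By Lemma~\ref{lem:Lh}, only terms with $l\le L_h[u]\le C_r|\ln h|^{3/2}$ contribute, and the bound is uniform on $\Cb_r$ once $h\le h_0(r)$. By Lemma~\ref{lem:cont-epslh}, each $u\mapsto\varepsilon_{l,h}[u]$ is continuous on $\Cb_r$, so $\Phi$ is jointly continuous on $\Cb_r\times\bbR$. Moreover
\[
\partial_t\Phi(u,t)=-\sum_{l=1}^{\infty} f_{M_h}'\bigl(\varepsilon_{l,h}[u]-t\bigr)>0
\]
on a neighborhood of $(u,\varepsilon_{F,h}[u])$, because $f'<0$ strictly, $\chi_M'\le 0$, and at least the first term contributes strictly when $\varepsilon_{1,h}[u]-\varepsilon_{F,h}[u]<M_h$ (which follows from $M_h=2\mu^{-1}|\ln h|\to\infty$ and uniform boundedness of $\varepsilon_{1,h}$ on $\Cb_r$, exactly as in the proof of Lemma~\ref{lem:Lh}). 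Thus the implicit function theorem delivers a continuous (indeed $C^1$) map $u\mapsto\varepsilon_{F,h}[u]$. Equivalently, if $u_n\to u$ in $\Ltwo$ then $\Phi(u_n,\cdot)\to\Phi(u,\cdot)$ uniformly on bounded $t$-intervals, and the unique zero of a strictly increasing continuous family depends continuously on the family.

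The main obstacle will be the injectivity claim. Taken literally, injectivity requires $u_1\ne u_2$ in $\Cb_r$ to imply $\varepsilon_{F,h}[u_1]\ne\varepsilon_{F,h}[u_2]$; since $\Cb_r\subset L^2(\Omega)$ is infinite-dimensional and the codomain is $\bbR$, a purely set-theoretic injectivity is delicate. My plan is to try to lift the strict monotonicity of the continuous electron-density operator \cite[Theorem~6.5]{kai97} to the discrete setting: if $\varepsilon_{F,h}[u_1]=\varepsilon_{F,h}[u_2]=:\varepsilon_F^\star$, I would pair the identity $N_0=\sum_l f_{M_h}(\varepsilon_{l,h}[u_i]-\varepsilon_F^\star)$ with \eqref{eq:nh} and test against $u_1-u_2$, and invoke a discrete analogue of the Kaiser--Rehberg monotonicity argument (using $\partial_u\varepsilon_{l,h}[u](\phi)=\int_\Omega\phi\,\psi_{l,h}^2$ together with the Feynman--Hellmann formula and the strict decrease of $f$) to force $n_h[u_1]=n_h[u_2]$ and hence $u_1=u_2$. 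This step, which reconstructs a strict monotonicity for the scalar map $\varepsilon_{F,h}$ from the vectorial monotonicity of $n_h$, is where I expect the genuine work, and it is the portion that I would need to resolve carefully before declaring the proof complete.
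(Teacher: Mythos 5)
Your continuity argument is sound and is in substance the paper's own: the paper sets $g_h(y)=\sum_{l=1}^{L_h}f_{M_h}(\varepsilon_{l,h}-y)$, notes that it is increasing in $y$ and \emph{strictly} increasing near the solution $y_0$ (because $f_{M_h}(\varepsilon_{1,h}-y_0)>0$ forces the first term into the region where $f_{M_h}$ is strictly decreasing), and obtains continuity of $\varepsilon_{F,h}$ from Lemma~\ref{lem:cont-epslh} and the continuity of $f_{M_h}$; your observation that $\partial_t\Phi>0$ near the zero is the same mechanism, and the implicit-function-theorem packaging is acceptable. One genuine (if minor) omission: you presuppose that $\Phi(u,\cdot)$ has a zero at all. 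Existence is part of what this lemma must deliver---it is the rigorous justification of the definition \eqref{eq:epsLh}---and it is exactly where the hypothesis $h\le h_0(r)$ enters: by Lemma~\ref{lem:Lh}, $L_h$ is large for small $h$, so $\lim_{y\to+\infty}g_h(y)>N_0$ while $\lim_{y\to-\infty}g_h(y)=0$, and the intermediate value theorem produces $y_0$.

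The substantive problem is your reading of ``injective''. Taken literally (one-to-one from $\Cb_r$ into $\bbR$), the claim is not merely left unresolved in your proposal---it is incompatible with the continuity you have just proved: $\Cb_r$ is an open ball of the infinite-dimensional space $\Ltwo$, and no continuous injection of even a two-dimensional disk inside $\Cb_r$ into $\bbR$ can exist (remove a point whose image is interior to the image interval; the domain stays connected while the image becomes disconnected). So the program of lifting the Kaiser--Rehberg monotonicity of $n$ to a discrete monotonicity of $n_h$ in order to force $u_1=u_2$ cannot succeed; moreover $\varepsilon_{F,h}[u]$ depends on $u$ only through the finitely many discrete eigenvalues, so distinct potentials with coinciding discrete spectra would defeat it anyway. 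What the paper actually proves, and what is used later (e.g.\ in Theorem~\ref{thm:cont-Ah}), is \emph{well-definedness}: for each fixed $u$ the scalar equation $g_h(y)=N_0$ has a unique solution $y_0$, so that $u\mapsto\varepsilon_{F,h}[u]$ is a single-valued map; the word ``injective'' in the statement refers to this uniqueness in $y$, not to one-to-oneness in $u$. The tool needed for that is already in your write-up: monotonicity of $y\mapsto g_h(y)$ together with strict monotonicity near $y_0$ (your positivity of $\partial_t\Phi$) yields uniqueness of the zero. The fix is therefore not the ``genuine work'' you anticipate, but redirecting your $\partial_t\Phi>0$ argument from IFT-regularity to uniqueness-in-$t$, and adding the IVT existence step that uses Lemma~\ref{lem:Lh}.
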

\begin{proof}
We define $g_h(y):=\sum^{L_h}\limits_{l=1}f_{M_h}(\varepsilon_{l,h}-y)$ for any $y\in\mathbb{R}$. Clearly $g$ is a continuous and increasing function and satisfies $\lim\limits_{y\to -\infty}g_h(y)=0$. Moreover,
by Lemma~\ref{lem:Lh}, there exists an $h_0>0$ depending only on $r$ such that $L_h>N_0$ for any $h\in (0,h_0]$. This implies
$\lim\limits_{y\to \infty}g_{h}(y) >N_0$.
By the intermediate value theorem, there exists a $y_0\in\bbR$ satisfying
$g(y_0)=N_0$.

Since $f_{M_h}(\varepsilon_{l,h}-y_0)\ge f_{M_h}(\varepsilon_{l+1,h}-y_0)$, we infer that $f_{M_h}(\varepsilon_{1,h}-y_0)>0$. Thanks to \eqref{eq:f}, $f_{M_h}$ is strictly monotone in the neighborhood of $\varepsilon_{1,h}-y_0$. Then $g(y)$ is strictly monotone in the neighborhood of $y_0$. Therefore, $y_0$ is unique. So $\varepsilon_{F,h}[u]:=y_0$ defines an injective mapping from $\Ltwo$ to $\bbR$.
The continuity of $\varepsilon_{F,h}$ follows directly from Lemma~\ref{lem:cont-epslh} and the continuity of $f_{M_h}$.
\end{proof}
\vspace{1mm}

Now we are ready to prove the error estimate between the exact Fermi level $\varepsilon_F$ and the discrete Fermi level $\varepsilon_{F,h}$ .
In view of \eqref{eq:nh}, we have used the truncated distribution function $f_{M_h}$ with $M_h=2\mu^{-1}|\ln h|$. Since $f(t)$ decreases exponentially with $t$, it is expected that the truncation error and the numerical error are of the same order.

\begin{theorem}\label{thm:err-Fh}
Suppose $u\in\Cb_r$ with $r>0$.
There exists an $h_0>0$ depending only on $r$ and $\Omega$ such that
$0\le \varepsilon_{F,h}-\varepsilon_F \le C_r h^2$ holds for any $h\in (0,h_0]$.
\end{theorem}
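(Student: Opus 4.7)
The plan is to use the two electron-conservation identities
\[
N_0 \;=\; \sum_{l=1}^{\infty} f(\varepsilon_l-\varepsilon_F) \;=\; \sum_{l=1}^{\infty} f_{M_h}(\varepsilon_{l,h}-\varepsilon_{F,h})
\]
and extract $\varepsilon_{F,h}-\varepsilon_F$ by a mean-value argument. For the lower bound $\varepsilon_{F,h}\ge\varepsilon_F$, the Galerkin principle gives $\varepsilon_{l,h}\ge\varepsilon_l$ and the cutoff gives $f_{M_h}\le f$ pointwise, so, since $f_{M_h}$ is non-increasing, one has $f_{M_h}(\varepsilon_{l,h}-\varepsilon_F)\le f(\varepsilon_l-\varepsilon_F)$ termwise; summing yields $\sum_l f_{M_h}(\varepsilon_{l,h}-\varepsilon_F)\le N_0=\sum_l f_{M_h}(\varepsilon_{l,h}-\varepsilon_{F,h})$, and the strict monotonicity of $y\mapsto\sum_l f_{M_h}(\varepsilon_{l,h}-y)$ (established inside the proof of Lemma~\ref{lem:fermi-h}) forces $\varepsilon_{F,h}\ge\varepsilon_F$.

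For the upper bound, I would insert the intermediate quantity $\sum_l f_{M_h}(\varepsilon_{l,h}-\varepsilon_F)$ between the two identities and decompose
\[
\sum_{l=1}^{\infty}\bigl[f_{M_h}(\varepsilon_{l,h}-\varepsilon_F)-f_{M_h}(\varepsilon_{l,h}-\varepsilon_{F,h})\bigr] \;=\; T_1+T_2,
\]
where $T_1=\sum_l[f(\varepsilon_l-\varepsilon_F)-f(\varepsilon_{l,h}-\varepsilon_F)]$ collects the eigenvalue-discretization error and $T_2=\sum_l[f(\varepsilon_{l,h}-\varepsilon_F)-f_{M_h}(\varepsilon_{l,h}-\varepsilon_F)]$ collects the truncation error. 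Applying the mean-value theorem termwise on the left produces
\[
(\varepsilon_{F,h}-\varepsilon_F)\sum_{l=1}^{\infty}\bigl[-f_{M_h}'(\xi_l)\bigr] \;=\; T_1+T_2,
\]
with $\xi_l$ between $\varepsilon_{l,h}-\varepsilon_{F,h}$ and $\varepsilon_{l,h}-\varepsilon_F$. The coefficient on the left is bounded below by keeping only the $l=1$ term: the quantity $\varepsilon_{1,h}-\varepsilon_F$ stays uniformly bounded for $u\in\Cb_r$ by the first-eigenvalue estimate used in the proof of Lemma~\ref{lem:Lh}, and for both Boltzmann and Fermi--Dirac $f$ one has $-f'>0$ on bounded arguments, giving $-f_{M_h}'(\xi_1)\ge c_r>0$ for all small $h$.

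It remains to prove $|T_1|+|T_2|\le C_r h^2$. For $T_2$ only indices with $\varepsilon_{l,h}-\varepsilon_F>M_h$ contribute; with the tight choice $M_h=2\mu^{-1}|\ln h|$ and the bound $f(t)\le f_0 e^{-\mu t}$, each such term carries the factor $e^{-\mu M_h}=h^2$, while the residual sum $\sum_l e^{-\mu(\varepsilon_{l,h}-\varepsilon_F-M_h)}$ converges uniformly in $h$ via $\varepsilon_{l,h}\ge\varepsilon_l$ and the Weyl-type asymptotic \eqref{asym-lam}. For $T_1$, a second application of the mean-value theorem combined with $|f'|\le\mu f$ and Lemma~\ref{lem:err-eigen} gives $|T_1|\le\mu C_r h^2\sum_l\varepsilon_l^2 f(\varepsilon_l-\varepsilon_F)$; I would split the sum at $l=L_h$, using $\varepsilon_l\le C|\ln h|$ from Lemma~\ref{lem:Lh} together with convergence of $\sum_l l^{4/3}e^{-c l^{2/3}}$ to control the low-index part, and reusing the same exponential tail bound as for $T_2$ on the high-index remainder.

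The main obstacle is the uniform-in-$l$ control. Lemma~\ref{lem:err-eigen} carries the prefactor $\varepsilon_l^2\sim l^{4/3}$ which is only usefully small for moderate $l$, while the truncation length $L_h$ itself grows like $|\ln h|^{3/2}$ by Lemma~\ref{lem:Lh}; the delicate point is that the exponential cutoff $M_h=2\mu^{-1}|\ln h|$ must be calibrated precisely so that the truncation tail, the eigenvalue-perturbation tail, and the finite-part eigenvalue contribution all collapse at the target order $h^2$, uniformly in $u\in\Cb_r$.
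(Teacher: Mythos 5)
Your proposal follows essentially the same route as the paper's own proof: the lower bound from Galerkin monotonicity ($\varepsilon_{l,h}\ge\varepsilon_l$) together with $f_{M_h}\le f$, and the upper bound from the conservation identity, the mean-value theorem, a uniform lower bound on $\sum_l|f_{M_h}'(\xi_l)|$ extracted from the $l=1$ term, and Lemma~\ref{lem:err-eigen} combined with exponential decay of $f'$ to control the eigenvalue-discretization tail. The only differences are cosmetic: you pivot through $\sum_l f_{M_h}(\varepsilon_{l,h}-\varepsilon_F)$ and apply the mean-value theorem separately for the Fermi-level increment and for $T_1$, whereas the paper pivots through $\sum_l f_{M_h}(\varepsilon_l-\varepsilon_F)$ and applies it once to the joint increment $(\varepsilon_{l,h}-\varepsilon_l)+(\varepsilon_F-\varepsilon_{F,h})$; also your displayed decomposition has a harmless sign slip (it should read $=-(T_1+T_2)$), which your subsequent mean-value identity silently corrects.
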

\begin{proof} The Galerkin approximation of the eigenvalue problem implies $\varepsilon_{l,h}\ge \varepsilon_l$. In view that
\begin{align*}
\sum^{L_h}_{l=1}f_{M_h}(\varepsilon_{l,h}-\varepsilon_{F})
\le \sum^{L_h}_{l=1}f_{M_h}(\varepsilon_{l}-\varepsilon_{F})
=\sum^{\infty}_{l=1}f(\varepsilon_l-\varepsilon_F)
= N_0 =\sum^{L_h}_{l=1}f_{M_h}(\varepsilon_{l,h}-\varepsilon_{F,h}),
\end{align*}
we easily know $\varepsilon_{F,h}\ge \varepsilon_{F}$. Since $f_{M_h}(\varepsilon_{l,h}-\varepsilon_{F,h})=0$ for all $l\ge L_h$, it is clear that
\begin{align*}
\sum^{\infty}_{l=1}\big[f_{M_h}(\varepsilon_{l,h}-\varepsilon_{F,h})-f_{M_h}(\varepsilon_{l}-\varepsilon_{F})\big]
\le \sum_{l=1}^\infty\big|f(\varepsilon_l-\varepsilon_F)
    -f_{M_h}(\varepsilon_l-\varepsilon_F)\big|
\le C e^{-\mu M_h} \le Ch^2.
\end{align*}

By the mean value theorem, there is a $\xi_l$ between $\varepsilon_{l,h}-\varepsilon_{F,h}$ and $\varepsilon_l-\varepsilon_{F}$ such that
\begin{equation}\label{eq:xil}
\sum_{l=1}^{\infty}\big[f_{M_h}(\varepsilon_{l,h}-\varepsilon_{F,h}) -f_{M_h}(\varepsilon_l-\varepsilon_{F})\big]
=\sum_{l=1}^{\infty} f_{M_h}'(\xi_l) \big[(\varepsilon_{l,h}-\varepsilon_l)
+(\varepsilon_{F}-\varepsilon_{F,h})\big].
\end{equation}
From \eqref{eq:f}, it is easy to see $|f'(t)|\ge \mu f(t)$. Using Lemma~\ref{lem:err-eigen} and $\varepsilon_{F,h}\ge \varepsilon_{F}$, we deduce that
\ben
f(\varepsilon_{1,h}-\varepsilon_{F,h})\ge
f(\varepsilon_{1}-\varepsilon_{F})e^{-C_r\varepsilon_1^2 h^2}.
\een
By \eqref{eq:epsF} and \eqref{ub-lam}--\eqref{lb-lam}, both $\varepsilon_1$ and $\varepsilon_F$ are bounded by a constant depending only on $r$. Then
\ben
\sum\limits_{l=1}^{\infty} |f_{M_h}'(\xi_l)| \ge \mu
\max\big( f(\varepsilon_{1,h}-\varepsilon_{F,h}), f(\varepsilon_{1}-\varepsilon_{F})\big)
\ge C_r.
\een
Finally, from Lemmas~\ref{lem:eigenf} and \ref{lem:err-eigen}, we deduce that
\begin{align*}
\varepsilon_{F,h}-\varepsilon_{F} \le C h^2
+ \sum_{l=1}^{\infty} |f_{M_h}'(\xi_l)|(\varepsilon_{l,h}-\varepsilon_l)
\le C h^2 + C_r h^2 \sum_{l=1}^{\infty} (1+ \varepsilon_l^2)|f_{M_h}'(\xi_l)|.
\end{align*}
The proof is finished upon the fact that $|f'(t)|$ decreases exponentially as $t\to\infty$.
\end{proof}

\section{A priori error estimates}

The purpose of this section is to establish the error estimate between the exact solution $V$ and the numerical solution $V_h$ by using the abstract result in Theorem~\ref{thm:compact}.
Remember that conditions 1 and 2 are already verified in section~\ref{sec:dA}. The key step is to construct a discrete operator $\Ca_h:\Ltwo\to X_h$ which satisfies condition~3 of
Theorem~\ref{thm:compact}.

For $u\in L^2(\Omega)$, let $\Ca_h[u]:=\xi_h$ be the solution to the discrete problem
\begin{align}\label{Poisson-xih}
(\nabla \xi_h,\nabla v_h) = (n_h[u]-n_D, v_h)\qquad \forall\, v_h\in X_h.
\end{align}
Clearly the fixed point of $\Ca_h$, which satisfies $V_h=\Ca_h[V_h]$, is the solution to problem \eqref{eq:prob-Vh}.
Next we prove that $\Ca_h$ maps the open ball $\Cb_r$ continuously to $X_h$ when $h$ is small enough.
\vspace{1mm}

\begin{theorem}\label{thm:cont-Ah}
There exists an $h_0>0$ depending only on $r$ such that, for any $h\in (0, h_0]$, the mapping $\Ca_{h}: \Cb_r\to X_h$ is continuous.
\end{theorem}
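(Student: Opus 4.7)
The plan is to factor $\Ca_h$ as the composition $u \mapsto n_h[u] \mapsto \Ca_h[u]$ and to bypass the possible discontinuity of individual discrete eigenfunctions by invoking functional calculus on the finite-dimensional discrete Hamiltonian. Since $\Ca_h[u]$ is the Galerkin solution of the Poisson problem with right-hand side $n_h[u] - n_D$, Poincar\'{e}'s inequality yields $\NHone{\Ca_h[u] - \Ca_h[w]} \le C\,\NLtwo{n_h[u] - n_h[w]}$, so it suffices to prove that $n_h: \Cb_r \to \Ltwo$ is continuous once $h$ is small enough that Lemma~\ref{lem:fermi-h} applies.

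To that end, I would introduce the discrete Hamiltonian operator $A_u: X_h \to X_h$ defined by $(A_u v_h, w_h) = a(u; v_h, w_h)$ for all $v_h, w_h \in X_h$, which is self-adjoint with respect to the $\Ltwo$ inner product and whose spectral data are precisely $(\varepsilon_{l,h}[u], \psi_{l,h})$. A Cauchy--Schwarz argument combined with the inverse inequality $\N{v_h}_{L^4(\Omega)} \le C_h \NLtwo{v_h}$ on $X_h$ shows that $u \mapsto A_u$ is Lipschitz from $\Ltwo$ into $\Cl(X_h)$. Combining this with the continuity of $\varepsilon_{F,h}[\cdot]$ from Lemma~\ref{lem:fermi-h}, the smoothness of $f_{M_h}$, and the uniform boundedness of $\mathrm{spec}(A_u)$ for $u \in \Cb_r$ (via Lemma~\ref{lem:eigenf} and an inverse estimate), continuous functional calculus for self-adjoint operators on a finite-dimensional space yields that
\begin{equation*}
T_u := f_{M_h}\bigl(A_u - \varepsilon_{F,h}[u]\,I\bigr)
\end{equation*}
depends continuously on $u \in \Cb_r$ in the operator norm of $\Cl(X_h)$.

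The last step is to read off $n_h$ from $T_u$. For any $\Ltwo$-orthonormal eigenbasis of $A_u$, the Schwartz kernel of $T_u$ is $K_{T_u}(x,y) = \sum_l f_{M_h}(\varepsilon_{l,h}[u] - \varepsilon_{F,h}[u])\,\psi_{l,h}(x)\,\psi_{l,h}(y)$, so $n_h[u](x) = K_{T_u}(x,x)$; expanding $T_u$ in the nodal basis $\{\phi_i\}$ of $X_h$ gives $n_h[u] = \sum_{i,j} (T_u)_{ij}\,\phi_i \phi_j$, from which $\NLtwo{n_h[u] - n_h[w]} \le C_h \|T_u - T_w\|_{\Cl(X_h)}$ follows immediately and closes the argument.

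The main obstacle I expect is that individual discrete eigenfunctions $\psi_{l,h}$ need not depend continuously on $u$ whenever eigenvalues coalesce or cross; any direct argument tracking them via Lemma~\ref{lem:err-eigen} would have to cluster eigenvalues and manage spectral projectors by hand. Replacing eigenfunctions by the single operator $T_u$ bypasses this entirely, because functional calculus sees only the spectral measure as a whole — the basis-invariance lemma at the start of Section~3.1 is precisely the pointwise manifestation of this fact.
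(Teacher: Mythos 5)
Your proposal is correct, and it takes a genuinely different route from the paper's in the one step that actually carries the difficulty. Like the paper, you reduce the claim to continuity of $n_h:\Cb_r\to\Ltwo$ via the stability of the Galerkin--Poisson solve, and both arguments require $h$ small enough that Lemma~\ref{lem:fermi-h} makes $\varepsilon_{F,h}$ well-defined and continuous on $\Cb_r$. Where the paper differs is in the treatment of the eigenfunctions: it introduces a discrete Hamiltonian $\Ch_h[u]\in\Cl(X_h)$, orthogonally decomposes each eigenfunction $\psi_{l,h}$ of $\Ch_h[w]$ along the eigenspace $X_{l,h}[u]$ of $\Ch_h[u]$ and its complement, shows the complementary part tends to zero as $w\to u$ in $\Ltwo$, and then invokes precisely the basis-invariance fact you cite (the first lemma of subsection~3.1) so that the normalized projections $\hat\phi_{l,h}$ may replace $\psi_{l,h}$ in $n_h[u]$. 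That is the hand-managed spectral-projector argument you anticipated as the obstacle. Your version encapsulates all of that in the single self-adjoint operator $T_u=f_{M_h}(A_u-\varepsilon_{F,h}[u]I)$: Lipschitz dependence $u\mapsto A_u$ in $\Cl(X_h)$ (a finite-dimensional inverse estimate), norm continuity of $\varepsilon_{F,h}$, and functional calculus for a fixed smooth function on a compact spectral window give norm continuity of $T_u$ with no reference to individual eigenpairs; eigenvalue crossings and multiplicities never appear. This is arguably more robust than the paper's decomposition, which tacitly assumes the eigenspace it projects onto captures all the mass in the limit.

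One cosmetic correction to your final step: with $(T_u)_{ij}$ denoting the matrix of $T_u$ in the nodal basis $\{\phi_i\}$, the correct expansion is $n_h[u]=\sum_{i,j}(T_uM^{-1})_{ij}\,\phi_i\phi_j$ with $M_{ij}=(\phi_i,\phi_j)$ the mass matrix, not $\sum_{i,j}(T_u)_{ij}\,\phi_i\phi_j$, because the nodal basis is not $\Ltwo$-orthonormal. This does not affect the argument: $T_u\mapsto n_h[u]$ is still a fixed linear map on the finite-dimensional space $\Cl(X_h)$, so the bound $\NLtwo{n_h[u]-n_h[w]}\le C_h\N{T_u-T_w}_{\Cl(X_h)}$ survives, which is all you used.
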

\begin{proof}
For any $u\in\Cb_r$, let $\Ch_{h}[u]\in\Cl(X_h)$ be the discrete Schr\"{o}dinger operator defined as follows: for $\phi_h\in X_h$, $\Ch_h[u]\phi_h\in X_h$ solves the discrete problem
\ben
(\Ch_h[u]\phi_h,v_h)_1= a(u;\phi_{h},v_h\big)\qquad \forall\,v_h\in X_h.
\een
Suppose $\varepsilon_{l,h}[u]$ is an eigenvalue of $\Ch_h[u]$ and define $X_{l,h}[u]:=\ker(\varepsilon_{l,h}[u]\Ci-\Ch_h[u])$.
Consider the orthogonal decomposition $X_h = X_{l,h}[u]\oplus X_{l,h}^\perp[u]$ where $X_{l,h}^\perp[u]$ denotes the orthogonal complement space of $X_{l,h}[u]$.
The closed graph theorem implies $(\varepsilon_{l,h}[u]\Ci-\Ch_h[u])^{-1}\in \Cl(Y_{l,h}[u], X_{l,h}^\perp[u])$, where
$Y_{l,h}[u]:=\mathrm{Range}(\varepsilon_{l,h}[u]\Ci-\Ch_h[u])$.

Similarly, for $w\in\Ltwo$, suppose $\psi_{l,h}$ is an eigenfunction of $\Ch_h[w]$ belonging to $\varepsilon_{l,h}[w]$ and satisfying $\NLtwo{\psi_{l,h}}=1$. Consider the splitting
\ben
\psi_{l,h}= \phi_{l,h}+\phi_{l,h}^\perp,\qquad
\phi_{l,h}\in X_{l,h}[u],\quad
\phi_{l,h}^\perp\in X_{l,h}^\perp[u].
\een
Then $\phi_{l,h}$ is an eigenfunction of $\Ch_h[u]$ belonging to $\varepsilon_{l,h}[u]$ and satisfying
$\NHone{\phi_{l,h}}\le\NHone{\psi_{l,h}}$.
Clearly $v_h^\perp :=(\varepsilon_{l,h}[u]\Ci-\Ch_h[u])^{-1}\phi_{l,h}^\perp\in X_{l,h}^\perp[u]$ satisfies
\begin{align*}
\lim_{w\to u}\NLtwo{\phi_{l,h}^\perp-(\varepsilon_{l,h}[w]\Ci-\Ch_h[w])v_h^\perp}
=\,&\lim_{w\to u}\NLtwo{(\varepsilon_{l,h}[u]-\varepsilon_{l,h}[w]+\Ch_h[w]-\Ch_h[u])v_h^\perp} \\
=\,& \lim_{w\to u}\NLtwo{(u-w)v_h^\perp} \\
=\,&  0.
\end{align*}
Notice $\phi_{l,h}\in X_{l,h}[u]$, $\psi_{l,h}\in X_{l,h}[w]$, and that
both $\Ch_h[u],\Ch_h[w]$ are self-adjoint, it is clear that
\begin{align*}
&\big(\phi_{l,h}, (\varepsilon_{l,h}[u]\Ci-\Ch_h[u])v_h^\perp\big) =\big((\varepsilon_{l,h}[u]\Ci-\Ch_h[u])\phi_{l,h},v_h^\perp\big)=0, \\
&\lim_{w\to u} \NLtwo{\phi_{l,h}^\perp}^2
= \lim_{w\to u}\big(\psi_{l,h}, \phi_{l,h}^\perp\big)
= \lim_{w\to u}\big(\psi_{l,h},(\varepsilon_{l,h}[w]\Ci-\Ch_h[w])v_h^\perp\big) =0.
\end{align*}
Define $\hat\phi_{l,h}:=\phi_{l,h}/\NLtwo{\phi_{l,h}}$. We know from
Lemmas~\ref{lem:cont-eigen} and \ref{lem:fermi-h} that
\begin{align}\label{fMh-wu}
\lim_{w\to u} \big\|f(\varepsilon_{l,h}[w]-\varepsilon_{F,h}[w])\psi_{l,h}^2
-f(\varepsilon_{l,h}[u]-\varepsilon_{F,h}[u])\hat\phi_{l,h}^2\big\|_{\Ltwo} =0 .
\end{align}

In view of \eqref{eq:nh}, $n_h[w]$ and $n_h[u]$ are given by
\ben
n_h[w] =\sum^{\infty}_{l=1}f_{M_h}(\varepsilon_{l,h}[w]-\varepsilon_{F,h}[w])\psi_{l,h}^2,
\quad
n_h[u] =\sum^{\infty}_{l=1}f_{M_h}(\varepsilon_{l,h}[u]-\varepsilon_{F,h}[u])
\hat\phi_{l,h}^2.
\een
They are actually sums of finitely many terms. Then \eqref{Poisson-xih} and \eqref{fMh-wu} show that
\ben
\lim_{w\to u} \NHone{\Ca_h[w]-\Ca_h[u]} \le
\lim_{w\to u} \NLtwo{n_h[w]-n_h[u]} =0.
\een
The proof is finished.
\end{proof}

\begin{lemma}\label{lem:n-nh}
Suppose $u\in\Cb_r$ with $r>0$ and $h$ is small enough.
There is a constant $C_r$ depending only on $r$ such that
\ben
\NHone{n[u]-n_h[u]} \le C_r h.
\een
\end{lemma}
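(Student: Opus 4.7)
The plan is to split
\begin{align*}
n[u]-n_h[u] = I_1 + I_2,
\end{align*}
where
\begin{align*}
I_1 &= \sum_{l=1}^{\infty}\bigl[f(\varepsilon_l-\varepsilon_F)-f_{M_h}(\varepsilon_l-\varepsilon_F)\bigr]\psi_l^2, \\
I_2 &= \sum_{l=1}^{\infty}\Bigl[f_{M_h}(\varepsilon_l-\varepsilon_F)\psi_l^2-f_{M_h}(\varepsilon_{l,h}-\varepsilon_{F,h})\psi_{l,h}^2\Bigr].
\end{align*}
Here $I_1$ is the cutoff/truncation error and $I_2$ is the eigenpair discretization error.

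For $I_1$, the construction \eqref{eq:chiM} of $\chi_{M_h}$ forces each summand to vanish when $\varepsilon_l-\varepsilon_F\le M_h$ and to be dominated by $f(\varepsilon_l-\varepsilon_F)\le f_0 e^{-\mu(\varepsilon_l-\varepsilon_F)}$ otherwise. A direct product-rule calculation $\nabla\psi_l^2=2\psi_l\nabla\psi_l$, together with Sobolev embeddings in three space dimensions and Lemma~\ref{lem:eigenf}, would yield $\NHone{\psi_l^2}\le C_r(1+|\varepsilon_l|)^{k}$ for some explicit integer $k$. Combining this with the Weyl-type growth $\varepsilon_l\ge Cl^{2/3}-C_r$ from \eqref{asym-lam} and \eqref{lb-lam}, I expect
\begin{align*}
\NHone{I_1} \le C_r e^{-\mu M_h/2}\sum_{l=1}^{\infty}e^{-\mu(\varepsilon_l-\varepsilon_F)/2}(1+|\varepsilon_l|)^{k} \le C_r e^{-\mu M_h/2} = C_r h,
\end{align*}
the last equality following from the choice $M_h=2\mu^{-1}|\ln h|$; the remaining infinite series converges by exponential decay in $\varepsilon_l$.

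For $I_2$, only the indices $l\le L_h$ contribute, and Lemma~\ref{lem:Lh} bounds $L_h$ by $C|\ln h|^{3/2}+C_r$. I split each summand as
\begin{align*}
f_{M_h}(\varepsilon_l-\varepsilon_F)\bigl(\psi_l^2-\psi_{l,h}^2\bigr) + \bigl[f_{M_h}(\varepsilon_l-\varepsilon_F)-f_{M_h}(\varepsilon_{l,h}-\varepsilon_{F,h})\bigr]\psi_{l,h}^2.
\end{align*}
For the first piece I use the identity $\nabla(\psi_l^2-\psi_{l,h}^2)=2(\psi_l-\psi_{l,h})\nabla\psi_l+2\psi_{l,h}\nabla(\psi_l-\psi_{l,h})$, H\"older's inequality, the embeddings $H^1\hookrightarrow L^6$ and $H^2\hookrightarrow L^\infty$, and Lemmas~\ref{lem:eigenf} and \ref{lem:err-eigen}, to obtain $\NHone{\psi_l^2-\psi_{l,h}^2}\le C_r(1+|\varepsilon_l|)^{k_1}h$. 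For the second piece, the mean value theorem together with Lemma~\ref{lem:err-eigen} and Theorem~\ref{thm:err-Fh} give $|f_{M_h}(\varepsilon_l-\varepsilon_F)-f_{M_h}(\varepsilon_{l,h}-\varepsilon_{F,h})|\le C_r(1+|\varepsilon_l|^2)|f_{M_h}'(\xi_l)|h^2$ for some intermediate $\xi_l$. Summing over $l\le L_h$, both weights $f_{M_h}(\varepsilon_l-\varepsilon_F)$ and $|f_{M_h}'(\xi_l)|$ decay exponentially in $\varepsilon_l$, so the weighted sums are bounded by $C_r$ and hence $\NHone{I_2}\le C_r h$.

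The hard part will be the estimate of $\NHone{\psi_l^2-\psi_{l,h}^2}$: the term $\|\psi_{l,h}\nabla(\psi_l-\psi_{l,h})\|_{L^2}$ requires a uniform $L^\infty$ bound on the discrete eigenfunction $\psi_{l,h}$, with explicit polynomial dependence on $|\varepsilon_l|$, which I plan to deduce either from an $L^\infty$ error estimate for the FEM eigenfunction or via an inverse inequality combined with an $L^2$-estimate for $\psi_l-\psi_{l,h}$. One must also keep every constant's dependence on $|\varepsilon_l|$ polynomial throughout, since the number of summands $L_h$ grows as $|\ln h|^{3/2}$; the exponential decay of $f$ is what ultimately absorbs both this growth and the polynomial factors and produces the sharp rate $C_r h$.
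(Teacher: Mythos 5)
Your decomposition into the cutoff error $I_1$ plus the eigenpair error $I_2$, with the latter further split into an eigenfunction piece $f_{M_h}(\varepsilon_l-\varepsilon_F)(\psi_l^2-\psi_{l,h}^2)$ and an eigenvalue piece $[f_{M_h}(\varepsilon_l-\varepsilon_F)-f_{M_h}(\varepsilon_{l,h}-\varepsilon_{F,h})]\psi_{l,h}^2$, is essentially the same three-part split the paper uses, and you invoke exactly the same ingredients (Lemmas~\ref{lem:eigenf}, \ref{lem:err-eigen}, Theorem~\ref{thm:err-Fh}, and exponential decay of $f$). The one step you flag as needing work, a polynomially controlled $L^\infty$ bound on $\psi_{l,h}$, is precisely what the paper handles, via a line-integral argument giving $\NLinf{\psi_{l,h}}\le C\N{\psi_l}_{H^2(\Omega)}$, so your plan is sound and matches the published proof.
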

\begin{proof}
Suppose $\psi_l$ and $\psi_{l,h}$ are the eigenfunctions belonging to $\varepsilon_{l}=\varepsilon_l[u]$ and $\varepsilon_{l,h}=\varepsilon_{l,h}[u]$, respectively, and satisfy $\NHone{\psi_l-\psi_{l,h}} \le C_r|\varepsilon_l| h$ according to Lemma~\ref{lem:err-eigen}. Then we have
\begin{align}\label{I123}
n[u] -n_h[u] = \sum^{\infty}_{l=1}f(\varepsilon_l-\varepsilon_{F}) \psi_l^2
    -\sum^{\infty}_{l=1}f_{M_h}(\varepsilon_{l,h}-\varepsilon_{F,h})\psi_{l,h}^2
    = I_1 + I_2 + I_3,
\end{align}
where
\begin{align*}
I_1 =\,& \sum^{\infty}_{l=1}\big[f(\varepsilon_l-\varepsilon_{F})
    -f_{M_h}(\varepsilon_l-\varepsilon_{F})\big] \psi_l^2,\\
I_2 =\,& \sum^{\infty}_{l=1}\big[f_{M_h}(\varepsilon_l-\varepsilon_{F})
    -f_{M_h}(\varepsilon_{l,h}-\varepsilon_{F,h})\big] \psi_l^2, \\
I_3 =\,& \sum^{\infty}_{l=1}f_{M_h}(\varepsilon_{l,h}-\varepsilon_{F,h})
    \big(\psi_l^2-\psi_{l,h}^2\big).
\end{align*}

Using equation \eqref{eq:chiM} and Lemma~\ref{lem:eigenf}, we immediately get
\begin{align*}
\NHone{I_1}\le C_r e^{-\mu M_h}\le C_r h^2.
\end{align*}
Let $\xi_l$ be given in \eqref{eq:xil}.
By Lemma~\ref{lem:eigenf} and arguments similar to the proof of Theorem~\ref{thm:err-Fh}, we have
\begin{align*}
\NHone{I_2} \le \sum_{l=1}^{\infty} \big|f'_{M_h}(\xi_l)
    (\varepsilon_{l,h}-\varepsilon_{l} + \varepsilon_{F}-\varepsilon_{F,h})\big|
    \NHone{\psi_l^2}
\le C_r h^2 \sum_{l=1}^{\infty}\big|f'_{M_h}(\xi_l)\big|\varepsilon_l^4
\le C_r h^2.
\end{align*}
Let $\pi_h\psi_l\in X_h$ be the nodal interpolation of $\psi_l$. It is well-known that
\begin{align}\label{err-pi}
\N{\psi_l-\pi_h\psi_l}_{H^j(\Omega)}\le C h^{2-j}\SN{\psi_l}_{H^2(\Omega)}
\le C_r(1+|\varepsilon_l|)h^{2-j},\qquad j=0,1.
\end{align}
Since $e_{l,h}:=\pi_h\psi_l-\psi_{l,h}\in C(\ol\Omega)$, there exists an $\Bx_0\in\Omega$ satisfying
$\SN{e_{l,h}(\Bx_0)} = \N{e_{l,h}}_{C(\ol\Omega)}$.
Let $L_0$ be a line segment connecting $\Bx_0$ and $\partial\Omega$. Since $e_{l,h}$ vanishes on $\partial\Omega$, it is clear that
\begin{align*}
\SN{e_{l,h}(\Bx_0)} \le \int_{L_0}\SN{\nabla e_{l,h}}\D l
\le \sum_{K\in\Ct_h} h^{-1/2}\SNHone[K\cap L_0]{e_{l,h}}
\le Ch^{-1}\SNHone{e_{l,h}}
\le C \N{\psi_l}_{H^2(\Omega)},
\end{align*}
where we have used \eqref{err-pi} and Lemma~\ref{lem:err-eigen} in the last inequality.
This indicates
\begin{align*}
\NLinf{\psi_{l,h}}\le \NLinf{\pi_h\psi_l} +\NLinf{e_{l,h}}
\le C \N{\psi_l}_{H^2(\Omega)}.
\end{align*}
By Lemmas~\ref{lem:eigenf} and \ref{lem:err-eigen}, there exists a constant depending only on $\Omega$ such that
\begin{align*}
\NHone{\psi_l^2-\psi_{l,h}^2} \le C
\big(\N{\psi_l}_{H^2(\Omega)}+\N{\psi_{l,h}}_{L^\infty(\Omega)}\big) \NHone{\psi_l-\psi_{l,h}}
\le C_r h \N{\psi_l}_{H^2(\Omega)}^2 .
\end{align*}
It follows that
\begin{align*}
\NHone{I_3} \le \sum_{l=1}^{\infty} f_{M_h}(\varepsilon_{l,h}-\varepsilon_{F,h})
    \NHone{\psi_l^2-\psi_{l,h}^2}
\le C_r h.
\end{align*}
The proof is finished by inserting the three inequalities into \eqref{I123}.
\end{proof}
\vspace{1mm}

Now we are ready to present the main result of this section.

\begin{theorem}\label{thm:V-Vh}
There exist an $r_0>0$ depending only $\N{\Ci-\Ca'[V]}_{\Cl(\Ltwo,\zbHone)}$ and an $h_0>0$ depending only on $r_0$ and $\NLtwo{V}$ such that, for any $h\in (0,h_0]$,
\begin{itemize}[leftmargin=6mm]
\item problem \eqref{eq:prob-Vh} has a solution $V_h\in B(V,r_0)$,

\item $\NHone{V -V_h} \le C_V h$, where $C_V$ is a constant depending only on $\NLtwo{V}$.
\end{itemize}
\end{theorem}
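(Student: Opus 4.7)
The strategy is to apply Theorem~\ref{thm:compact} at the fixed point $u=V$, taking $X=\zbHone$, $Y=\Ltwo$, the operator $\Ca$ defined by the Poisson solve \eqref{Poisson-xi}, and its finite element counterpart $\Ca_h$ defined by \eqref{Poisson-xih}. Conditions~1 and~2 of that theorem are already in hand: Fr\'echet differentiability of $\Ca$ at $V$ was established in Section~\ref{sec:dA}, and Lemma~\ref{lem:condition-2} provides a bound on $(\Ci-\Ca'[V])^{-1}$ in $\Cl(\zbHone)$ by a constant depending only on $\Omega$, so the constants $M_0,M_1$ in \eqref{IDA} may be chosen independent of $r$. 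One then picks $r$ and sets $r_0<(1+M_0M_1)^{-1}r$ as in the abstract theorem, taking $r$ large enough that $B(V,r_0)\subset\Cb_{R}$ in $\Ltwo$ with $R=\NLtwo{V}+Cr_0$; this is where the dependence of $h_0$ on $\NLtwo{V}$ will enter, through the constants in Lemma~\ref{lem:n-nh}.

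The heart of the argument is to verify Condition~3 of Theorem~\ref{thm:compact} with an explicit rate, i.e.\ to bound
\[
\epsilon(h)=\sup_{v\in B(V,r_0)}\NHone{\Ca_h[v]-\Ca[v]}.
\]
Fix $v\in B(V,r_0)$ and write $\xi=\Ca[v]\in H^2(\Omega)\cap\zbHone$, $\xi_h=\Ca_h[v]\in X_h$. I would interpose the standard Galerkin approximation $\tilde\xi_h\in X_h$ of $\xi$, i.e.\ $(\nabla\tilde\xi_h,\nabla w_h)=(n[v]-n_D,w_h)$ for all $w_h\in X_h$. Since $\Omega$ is a convex polyhedron and $\NLtwo{n[v]-n_D}$ is uniformly bounded over $v\in B(V,r_0)$ (from Lemma~\ref{lem:eigenf} together with the exponential decay of $f$), elliptic $H^2$-regularity and C\'ea's lemma give $\NHone{\xi-\tilde\xi_h}\le Ch\,\N{\xi}_{H^2(\Omega)}\le C_V h$. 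The remainder $\tilde\xi_h-\xi_h\in X_h$ solves $(\nabla(\tilde\xi_h-\xi_h),\nabla w_h)=(n[v]-n_h[v],w_h)$ for every $w_h\in X_h$; testing with $w_h=\tilde\xi_h-\xi_h$ and using Poincar\'e's inequality yields $\NHone{\tilde\xi_h-\xi_h}\le C\NLtwo{n[v]-n_h[v]}\le C_r h$ by Lemma~\ref{lem:n-nh}. Summing the two contributions, $\epsilon(h)\le C_V h\to 0$, which verifies Condition~3.

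With all three conditions in force, and with the continuity of $\Ca_h$ on $\Cb_{R}$ supplied by Theorem~\ref{thm:cont-Ah}, Theorem~\ref{thm:compact} then produces, for every $h\in(0,h_0]$ with $h_0$ depending only on $r_0$ and $\NLtwo{V}$, a fixed point $V_h=\Ca_h[V_h]\in B(V,r_0)$, which is precisely a solution of \eqref{eq:prob-Vh}. The error bound \eqref{abs-err} specialises to
\[
\NHone{V-V_h}\le 2M_1\NHone{\Ca_h[V_h]-\Ca[V_h]}\le 2M_1\,\epsilon(h)\le C_V h,
\]
yielding the claimed rate.

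The step I expect to be the main obstacle is the \emph{uniform} control of $\NHone{n[v]-n_h[v]}$ over the whole neighborhood $B(V,r_0)$, rather than pointwise at $v=V$. Lemma~\ref{lem:n-nh} already supplies the needed bound on any $\Ltwo$-ball $\Cb_R$, but one has to check that the constants filtering in through the eigenfunction $H^2$-bounds of Lemma~\ref{lem:eigenf}, the eigenvalue estimates of Lemma~\ref{lem:err-eigen}, the Fermi-level estimate of Theorem~\ref{thm:err-Fh}, and the logarithmic truncation count $L_h=O(|\ln h|^{3/2})$ are all genuinely uniform over $v\in B(V,r_0)$. This uniformity, together with the convex-polyhedral $H^2$-regularity constant for $\Ca[v]$, is what pins down the final constant $C_V$ and explains the $O(h)$ rate: it is the balance between the $O(h)$ truncation-plus-eigenfunction error in $n_h[v]$ and the $O(h)$ linear finite element error for the Poisson problem.
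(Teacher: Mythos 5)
Your proof is correct and follows essentially the same route as the paper's: verify Condition~3 of Theorem~\ref{thm:compact} by interposing an auxiliary Poisson solve to split $\Ca[v]-\Ca_h[v]$, bounding one piece by the linear FEM error $O(h)\N{\Ca[v]}_{H^2(\Omega)}$ and the other by Lemma~\ref{lem:n-nh}, then invoking the abstract theorem with Conditions~1--2 already supplied in Section~\ref{sec:dA} and Lemma~\ref{lem:condition-2}. The only cosmetic difference is the choice of intermediary: you interpose the Galerkin projection $\tilde\xi_h$ of the continuous problem (right-hand side $n[v]-n_D$), while the paper interposes the exact PDE solution $\zeta$ with the discretized right-hand side $n_h[u]-n_D$; both splittings give the same $O(h)$ bound on $\epsilon(h)$ and hence the same conclusion.
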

\begin{proof}
Take $r=2\NLtwo{V}$ and $u\in \Cb_r$. From \eqref{Poisson-xi}, there exists a constant $C>0$ depending only on $\Omega$ such that
\ben
\N{\Ca[u]}_{H^2(\Omega)} \le C\NLtwo{n[u]-n_D}\le C_r,
\een
where we have used the stability $\NLtwo{n[u]}\le C_r$ in the second inequality (see \cite[Lemma~6.2]{kai97}).

Remember that $\xi=\Ca[u]$ is the solution to problem \eqref{Poisson-xi} and $\xi_h=\Ca_h[u]$ is the solution to problem \eqref{Poisson-xih}. Moreover, let $\zeta$ be the solution to the elliptic problem
\begin{align*}
-\Delta \zeta =n_h[u]-n_D\quad \hbox{in}\;\; \Omega,\qquad
\zeta =0 \quad\hbox{on}\;\; \partial \Omega.
\end{align*}
Standard error estimates for Galerkin finite element approximations show that
\begin{align*}
\NHone{\zeta-\xi_h} \le Ch\N{\zeta}_{H^2(\Omega)} \le Ch \NLtwo{n_h[u]-n_D}.
\end{align*}
By Lemma~\ref{lem:n-nh}, we also have
\begin{align*}
\NHone{\xi-\zeta} \le C\NLtwo{n[u] -n_h[u]} \le C_r h.
\end{align*}
Combining the two inequalities yields
\begin{align}\label{est-A-Ah}
\NHone{\Ca[u]-\Ca_h[u]} = \NHone{(\xi-\zeta)+(\zeta-\xi_h)}
\le C_r h +Ch \NLtwo{n[u]-n_D} \le C_r h.
\end{align}
This implies $\lim\limits_{h\to 0}\NHone{\Ca[u]-\Ca_h[u]} =0$.

By Lemma~\ref{lem:condition-2}, the three conditions of Theorem~\ref{thm:compact} hold.
There exist an $r_0\in (0,r]$ depending only on $\N{\Ci-\Ca'[V]}_{\Cl(\Ltwo,\zbHone)}$ and an $h_0>0$ depending only on $r_0$ and $\NLtwo{V}$ such that problem \eqref{eq:prob-Vh} has a solution $V_h\in B(V,r_0)\cap X_h$.
Then $\NLtwo{V_h}\le r_0 +\NLtwo{V}\le 2r$.
Moreover, there exists a constant $C>0$ depends only on $r_0$ and $\NLtwo{V_h}$ such that
\ben
\NHone{V-V_h} \le C \NHone{\Ca_h[V_h]-\Ca[V_h]} \le C h.
\een

we obtain $\NHone{V-V_h} \le C_V h$ where the constant $C_V$ depends only on $r_0$ and $\NLtwo{V}$.

\end{proof}
\vspace{1mm}

\begin{corollary}\label{cor:n-nh}
Suppose $V$ and $V_h$ are the solutions to problem \eqref{prob-V} and \eqref{eq:prob-Vh}, respectively, as given in Theorem~\ref{thm:V-Vh}. Let $n[V]$ be the electron density defined in \eqref{eq:density} and $n_h[V_h]$ be the discrete electron density defined in \eqref{eq:nh}. There exists a constant $C_V$ depending only on $\NLtwo{V}$ and $\N{\Ci-\Ca'[V]}_{\Cl(\Ltwo,\zbHone)}$ such that
\begin{align*}
\NLtwo{n[V] -n_h[V_h]} \le C_V h.
\end{align*}
\end{corollary}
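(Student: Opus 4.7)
The strategy is to decompose the error through the intermediate quantity $n[V_h]$ via the triangle inequality:
\begin{align*}
\NLtwo{n[V] - n_h[V_h]} \le \NLtwo{n[V] - n[V_h]} + \NLtwo{n[V_h] - n_h[V_h]}.
\end{align*}
Each of the two pieces should be handled independently and then combined.

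For the second piece, I would apply Lemma~\ref{lem:n-nh} at the point $u = V_h$. Theorem~\ref{thm:V-Vh} guarantees $V_h \in B(V,r_0)$, so $\NLtwo{V_h} \le \NLtwo{V} + r_0$, which places $V_h$ inside an open ball $\Cb_{r'}$ whose radius $r'$ depends only on $\NLtwo{V}$ and on $\N{\Ci-\Ca'[V]}_{\Cl(\Ltwo,\zbHone)}$ through $r_0$. Lemma~\ref{lem:n-nh} then furnishes $\NHone{n[V_h]-n_h[V_h]} \le C_{r'}\,h$, and the continuous embedding $\Hone\hookrightarrow\Ltwo$ yields the same bound in the $\Ltwo$ norm.

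For the first piece, I would use the local Lipschitz continuity of the electron-density operator. Since $n\colon\Ltwo\to\Ltwo$ is Fr\'echet-differentiable at $V$ with $n'[V]\in\Cl(\Ltwo)$ (this is the fact recalled in section~\ref{sec:dA} from \cite{kai97}), for $V_h$ sufficiently close to $V$ in $\Ltwo$ we have
\begin{align*}
\NLtwo{n[V]-n[V_h]} \le \bigl(\N{n'[V]}_{\Cl(\Ltwo)} + 1\bigr)\NLtwo{V-V_h}.
\end{align*}
Theorem~\ref{thm:V-Vh} together with $\Hone\hookrightarrow\Ltwo$ delivers $\NLtwo{V-V_h}\le C\NHone{V-V_h}\le C_V h$, and combining these two inequalities gives $\NLtwo{n[V]-n[V_h]}\le C_V h$. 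Adding this to the bound on the second piece proves the claim.

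The main difficulty is not mathematical but one of bookkeeping: verifying that every constant depends only on the two quantities $\NLtwo{V}$ and $\N{\Ci-\Ca'[V]}_{\Cl(\Ltwo,\zbHone)}$ permitted by the statement. The radius $r_0$ is produced by Theorem~\ref{thm:V-Vh} with exactly this dependency, the constant in Lemma~\ref{lem:n-nh} depends only on $r'$ and $\Omega$, and $\N{n'[V]}_{\Cl(\Ltwo)}$ is controlled via the Kaiser--Rehberg bounds in terms of $\NLtwo{V}$. No new estimate beyond what is already established in sections~\ref{sec:dA} and~\ref{sec:fem} is required; this corollary is essentially a direct readout of Theorem~\ref{thm:V-Vh} and Lemma~\ref{lem:n-nh}.
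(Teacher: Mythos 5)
Your proposal follows the same approach as the paper: split via the intermediate quantity $n[V_h]$, bound $\NLtwo{n[V_h]-n_h[V_h]}$ with Lemma~\ref{lem:n-nh}, and bound $\NLtwo{n[V]-n[V_h]}$ by local Lipschitz continuity of the density operator combined with Theorem~\ref{thm:V-Vh}. The only cosmetic difference is that you derive the Lipschitz bound from the Fr\'echet derivative at $V$ plus a smallness argument in $h$, while the paper invokes Kaiser--Rehberg's Theorem~6.4 directly for Lipschitz continuity on the ball $\Cb_r$.
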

\begin{proof}
From Theorem~\ref{thm:V-Vh}, we know that
$V_h\in B(V,r_0)\subset \Cb_r$ where $r=r_0+\NLtwo{V}$ and $r_0>0$ depends only on $\N{\Ci-\Ca'[V]}_{\Cl(\Ltwo,\zbHone)}$. By \cite[Theorem~6.4]{kai97} and Theorem~\ref{thm:V-Vh}, there exists a constant $C_r$ such that
\ben
\NLtwo{n[V] - n[V_h]} \le C_r \NLtwo{V-V_h}\le C_V h.
\een
Moreover, from Lemma~\ref{lem:n-nh}, we easily know
$\NLtwo{n[V_h] - n_h[V_h]} \le C_rh$. The proof is finished.
\end{proof}
\vspace{1mm}

\begin{remark}
The first-order error estimate $\NHone{V-V_h} =O(h)$ is optimal upon using the linear finite element space $X_h$. Therefore, the estimate of $\NLtwo{n[V]-n_h[V_h]}$ is also first-order, as given theoretically in Lemma~\ref{lem:n-nh}. However, numerical experiments in section~\ref{sec:num} imply $\NLtwo{n[V]-n_h[V_h]} =O(h^2)$.
we are going to improve the estimate of $\NLtwo{n[V]-n_h[V_h]}$ by investigating $\NLtwo{V-V_h}$ in a future work.
\end{remark}
\vspace{1mm}

\begin{remark}
Although the error estimate is established for linear finite elements, it is straightforward to extend the result to high-order finite elements. High-order error estimates can also be obtained if the exact solution $V$ is smooth enough.
\end{remark}

\section{Numerical experiments}
\label{sec:num}	

In this section, we report two numerical experiments to verify the finite element error estimates in Section~\ref{sec:fem}. Our code is based on the finite element toolbox PHG which is developed in the State Key Laboratory of Scientific and Engineering Computing (LSEC), Chinese Academy of Sciences. The numerical experiments were carried out on the super computer LSSC-IV of LSEC.

Since the discrete problem \eqref{eq:prob-Vh} is nonlinear, we use the fixed-point iterative algorithm for solving it. Given an approximate solution $V_h^{(k)}\in X_h$,
we compute $V_h^{(k+1)}\in X_h$ with the equation
\begin{align*}
\big(\nabla V_h^{(k+1)},\nabla v_h\big) =
\big(n_h[V_h^{(k)}]-n_D, v_h\big)\qquad \forall\, v_h\in X_h,
\end{align*}
where $n_h[V_h^{(k)}]$ is the approximate electron density defined in \eqref{eq:nh}.
The computational domain is set by $\Omega =(0,1)^3$. Throughout this section, the distribution function is chosen as $f(t)=f_0e^{-\mu t}$ and the number of electrons is given by $N_0=100$.

\begin{example}\label{exa1}
Let $V_0= \sin(\pi x)\sin(\pi y)\sin(\pi z)$ and define the doping file as
\begin{align}\label{exa1-nD}
n_D = \sum_{l=1}^\infty f(\lambda_l-\varepsilon_F)\phi_l^2 -\Delta V_0,
\end{align}
where $\lambda_l,\phi_l$ denote the eigenvalues and eigenfunctions of $(-\Delta)|_{\zbHone}$, respectively, and
the Fermi level $\varepsilon_F$ is determined through the equation $N_0=\sum_{l=1}^\infty f(\lambda_l-\varepsilon_F)$.
Note that the eigenvalues and eigenfunctions of $(-\Delta)|_{\zbHone}$ are given explicitly by
\ben
(i^2+j^2+k^2)\pi^2, \quad \sin(i\pi x)\sin(j\pi y)\sin(k\pi z),\quad
i,j,k\in\bbZ_+.
\een
The exact solution of the Schr\"{o}dinger-Poisson model is given by $V=V_0$ and $(\varepsilon_l,\psi_l) = (\lambda_l,\phi_l)$.
\end{example}

To show the convergence orders of numerical solutions, we designate the number of elements by $N_e$, the $L^2$-error of the numerical potential by $e_{V,0}:=\NLtwo{V-V_h}$,
the $H^1$-error of the numerical potential by $e_{V,1}:=\NHone{V-V_h}$, and the $L^2$-error of the numerical density by $e_{n,0}:=\NLtwo{n-n_h}$. Here $n = \sum_{l=1}^\infty f(\lambda_l-\varepsilon_F)\phi_l^2$ is computed approximately with a truncation of the series such that the truncation error is less than $10^{-8}\times\NLtwo{n}$.

\begin{table}[htp]
\begin{center}
\caption{Convergence orders of $V_h$ and $n_h$ for $\mu=0.1$ and $f_0=1$ $\mathrm{(Example~\ref{exa1})}$.}
\label{tab:exa1-1}
\begin{tabular}{|c|c|c|c|c|c|c|}
\hline
$N_e$ & $e_{V,0}$ & order & $e_{V,1}$ & order & $e_{n,0}$ & order\\
\hline
   	 			$6144$ & $1.85e-02$  & --- & $2.30e-01$ & ---  & $2.82e-02$ & --- \\
   	 			$49152$ & $4.78e-03$ & 1.95 &  $1.09e-01$ & 1.08 & $7.26e-03$ & 1.96 \\
   	 			$393216$ & $1.21e-03$ & 1.99 &    $5.37e-02$ & 1.02 & $1.83e-03$ & 1.99 \\
   	 			$3145728$ & $3.02e-04$ & 2.00 & $2.67e-02$ &  1.01 & $4.58e-04$ & 2.00 \\
\hline
\end{tabular}
\end{center}
\end{table}

\begin{table}[htp]
\begin{center}
\caption{Convergence orders of $V_h$ and $n_h$ for $\mu=2.2\times 10^{-3}$ and $f_0=4.4\times 10^{-6}$ $\mathrm{(Example~\ref{exa1})}$.}
\label{tab:exa1-2}
\begin{tabular}{|c|c|c|c|c|c|c|}
\hline
$N_e$ & $e_{V,0}$ & order & $e_{V,1}$ & order & $e_{n,0}$ & order\\
\hline
$6144$    & $1.77e-02$ & ---     & $2.28e-01$ & ---     & 7.65e-04 & --- \\
$49152$   & $4.55e-03$ & $1.96$ & $1.09e-01$ & $1.07$ & 1.57e-04 & 2.28 \\
$393216$  & $1.15e-03$ & $1.99$ & $5.36e-02$ & $1.01$ & 3.68e-05 & 2.09 \\
$3145728$ & $2.87e-04$ & $2.00$ & $2.67e-02$ & $1.01$ & 9.06e-06 & 2.02 \\
\hline
\end{tabular}
\end{center}
\end{table}

In Table~\ref{tab:exa1-1}, $\mu=0.1$ and $f_0=1$ are set in the distribution function. Clearly we observe the asymptotic behaviors of the errors
\begin{align*}
e_{V,0} = O(h^2), \qquad e_{V,1} = O(h),\qquad e_{n,0} = O(h^2).
\end{align*}
In Table~\ref{tab:exa1-2}, we set $\mu=2.2\times 10^{-3}$ and $f_0=4.4\times 10^{-6}$. For this value of $\mu$, the distribution function $f$ decays much slower than the former case, and we need more terms in the truncation of the series. We still observe the first-order convergence of $\NHone{V-V_h}$ and the second-order convergence of $\NLtwo{V-V_h}$. However, due to the inferior accuracy from approximating large eigenvalues, the convergence order of $\NLtwo{n-n_h}$ is damnified on coarse meshes. The last column of Table~\ref{tab:exa1-2} shows a convergence order even less than $1$ on the mesh with 49,152 elements. But the second-order convergence of $n_h$ is restored on the finest mesh. Here we choose a small value of $f_0$ to guarantee the convergence of the fixed-point iterative algorithm.

\begin{example}\label{exa2}
The applied potential is chosen as $V_0=[e^{x(1-x)}-1][e^{y(1-y)}-1][e^{z(1-z)}-1]$. The definition of $n_D$ is the same as \eqref{exa1-nD}.
\end{example}

In Table~\ref{tab:exa2-1}, we set $\mu=0.1$ and $f_0=1$. Again we observe that the convergence of $V_h$ is of second-order in the $L^2$-norm and of first-order in the $H^1$-norm, and that the convergence of $n_h$ is of second-order in the $L^2$-norm, asymptotically.

\begin{table}[htp]
\begin{center}
\caption{Convergence orders of $V_h$ and $n_h$ for $\mu=0.1$ and $f_0=1$ $\mathrm{(Example~\ref{exa2})}$.}
\label{tab:exa2-1}
\begin{tabular}{|c|c|c|c|c|c|c|}
\hline
$N_e$ & $e_{V,0}$ & order & $e_{V,1}$ & order & $e_{n,0}$ & order\\
\hline
$6144$    & $1.28e-03$ & ---    & $8.75e-03$ & ---     &2.87e-02  & --- \\
$49152$   & $3.38e-04$ & $1.92$ & $3.17e-03$ & $1.47$  &7.41e-03  & 1.95 \\
$393216$  & $8.58e-05$ & $1.98$ & $1.37e-03$ & $1.21$  &1.87e-03  & 1.99 \\
$3145728$ & $2.15e-05$ & $2.00$ & $6.55e-04$ & $1.06$  &4.68e-04  & 2.00 \\
\hline
\end{tabular}
\end{center}
\end{table}

\section{Conclusions}

In this paper, we propose a unified approximation theory for nonlinear problems. Based on the conditions that the approximate operator $\Ca_h$ converges locally to the original operator $\Ca$, and that  $(\Ci-\Ca'[u])^{-1}$ is bounded in the neighborhood of the exact solution $u$, we have obtained the existence and error estimates of the approximate solution $u_h$. Using this theory, we established the optimal finite element error estimate for the numerical solution. The theory has the potential to be used to more general problems. In the next work, we are going to apply
the theory to the finite element approximation of the quantum-corrected drift-diffusion model \cite{ben98,de05}.

\bibliographystyle{plain}


\end{document}